\title{Exact SDP Formulation for Discrete-Time Covariance Steering with Wasserstein Terminal Cost}
\author{Isin M. Balci  \and \hspace{2cm} Efstathios Bakolas \thanks{This research has been supported  in part by NSF award CMMI-1937957. Isin M. Balci (graduate student) and Efstathios Bakolas (Associate Professor) are with the Department of Aerospace Engineering and Engineering Mechanics at The University of Texas at Austin, Austin, Texas 78712-1221, USA, Email: isinmertbalci@utexas.edu, bakolas@austin.utexas.edu.}}
\newcommand{\R}[1]{\mathbb{R}^{#1}}
\newcommand{\tr}[1]{\ensuremath{\mathrm{tr}\left( #1 \right)}}
\newcommand{\E}[1]{\mathbb{E} [ #1 ]}
\newcommand{\Cov}[1]{\mathrm{Cov}( #1 )}
\renewcommand{\t}{^{\mathrm{T}}}
\renewcommand{\S}[1]{\mathbb{S}_{#1}}
\newcommand{\bmu}{\bm{u}}
\newcommand{\bmw}{\bm{w}}
\newcommand{\bmx}{\bm{x}}
\newcommand{\cK}{\bm{\mathcal{K}}}
\newcommand{\bmtheta}{\bm{\Theta}}
\newcommand{\Imat}[1]{\mathbf{I}_{#1}}
\newcommand{\parantheses}[1]{\left( #1 \right) }
\renewcommand{\tr}[1]{\operatorname{tr} \left( #1 \right)}
\newcommand{\bdiag}[1]{\operatorname{bdiag}\left( #1 \right)}
\newcommand{\vertcat}[1]{\operatorname{vertcat} ( #1  )}
\newcommand{\curly}[1]{\{ #1 \}}
\theoremstyle{definition}
\newtheorem{problem}{Problem}
\theoremstyle{plain}
\newtheorem{lemma}{Lemma}
\theoremstyle{definition}
\newtheorem*{remark}{Remark}
\newtheorem{theorem}{Theorem}
\newtheorem{assumption}{Assumption}
\begin{document}

\maketitle

\begin{abstract}
    In this paper, we present new results on the covariance steering problem with Wasserstein distance terminal cost. 
    We show that the state history feedback control policy parametrization, which has been used before to solve this class of problems, requires an unnecessarily large number of variables and can be replaced by a randomized state feedback policy which leads to more tractable problem formulations without any performance loss. 
    In particular, we show that under the latter policy, the problem can be equivalently formulated as a semi-definite program (SDP) which is in sharp contrast with our previous results that could only guarantee that the stochastic optimal control problem can be reduced to a difference of convex functions program. 
    Then, we show that the optimal policy that is found by solving the associated SDP corresponds to a deterministic state feedback policy.
    Finally, we present non-trivial numerical simulations which show the benefits of our proposed randomized state feedback policy derived from the SDP formulation of the problem over existing approaches in the field in terms of computational efficacy and controller performance.
\end{abstract}

\begin{IEEEkeywords}
Stochastic Optimal Control, Optimization, Uncertain Systems, Semidefinite Programming
\end{IEEEkeywords}

\section{Introduction}\label{s:introduction}
In this paper, we consider Covariance Steering (CS) problems for discrete-time linear stochastic systems. 
In this particular class of stochastic optimal control problems, one tries to find a control policy that will steer the probability distribution of the state, or more precisely the first two moments of the latter distribution, to a goal distribution in finite time or infinite time.
Specifically, we address a soft constrained version of the CS problem in which the objective is defined as finding a control policy that will minimize the sum of the expected value of a running cost (e.g., control effort) and a terminal cost which corresponds to a distance metric in the space of probability distributions. 
To this end, we choose the (squared) Wasserstein distance \cite{p:givens1984class}.


\noindent\textit{Literature Review:}
Early versions of the CS problems were focused on the infinite horizon case \cite{p:skelton1987covassignment, p:skelton1987covcontroltheory, p:skelton1992improvedcovariance} for both discrete-time and continuous-time systems.
Finite-horizon CS problems in continuous time are addressed in \cite{p:chen2015covariance1, p:chen2015covariance2, p:chen2018covariance3}. A soft constrained version of the CS in continuous time which is based on the utilization of a Wasserstein distance terminal cost is addressed in \cite{p:halder2016covariancewasserstein}. However, the latter problem formulation does not offer computational advantages over standard CS problem formulations given that for its solution one has to utilize numerical optimal control techniques (e.g., shooting methods) that rely, in general, on nonlinear programming (NLP) methods.
Finite horizon discrete-time CS problems are typically addressed by means of optimization based approaches  \cite{p:bakolas2018covarianceautomatica, p:bakolas2019liouville, p:bakolas2018partialcovariance, p:kotsalis2021robustcovariance, p:goldshtein2017covariance,p:okamoto2018covariancechance, p:balci2021covariancedisturbance, p:okamoto2019pathplanning}.
Furthermore, the multi-agent CS problems are addressed in \cite{p:Saravanos2021DistributedCS}.
And these methods are extended to the nonlinear CS problems in \cite{p:bakolas2020greedynonlinearcovariance, p:ridderhof2019nonlinearcovariance, p:yi2020ddpcovariance}.
However, all of these approaches 
rely on a semi-definite relaxation of the terminal covariance constraint to reduce the CS problem to a convex program.
Recently, a maximum entropy finite-horizon CS problem for discrete-time deterministic linear systems was addressed in \cite{p:ito2022maxentropyCS} based on a Riccati equation approach.



In our previous work \cite{p:balci2020covariancewasserstein}, we have addressed the discrete-time version of the CS problem with Wasserstein terminal cost proposed in \cite{p:halder2016covariancewasserstein} and showed that it can be recast as a difference of convex functions program (DCP) \cite{p:an2005dcprogramming} which can be solved for local optimality with convergence guarantees using the convex-concave procedure (CCP) \cite{p:yuille2003ccp}. 

\noindent\textit{Main Contributions:} As we have already mentioned, finite horizon discrete-time CS problems are typically treated as optimization problems in the relevant literature. The latter optimization problems are either convexified by semi-definite relaxations which prohibit finding the exact solutions to the CS problem or they are cast as generic nonlinear programs, for which there are no guarantees of convergence or global optimality in general.
To the best of our knowledge, this is the first paper that formulates the CS problem with a Wasserstein distance terminal cost as a convex semi-definite program specifically without relying on any convex relaxations of non-convex constraints. 

We first show that the optimal value of the CS problem with state history feedback control policies is lower bounded by the optimal value of the CS problem with the randomized state feedback policy.
Then, we formulate an SDP for solving the CS problem with a Wasserstein distance terminal cost and show that this SDP formulation is exact. 
Then, we express the Wasserstein distance in a convenient form that allows us to express the objective function of the SDP as a linear function of the decision variables.
Next, we show that the optimal policy, whose parameters are obtained by solving the associated SDP, corresponds to a deterministic feedback policy.
Finally, we show the efficacy of our formulation in terms of controller's performance and computation time via numerical simulations.

\noindent\textit{Structure of the Paper:}
The rest of the paper is organized as follows. In Section \ref{s:problem-formulation}, we formulate the CS problem with Wasserstein distance terminal cost. 
We then, summarize previous results based on the state history feedback policy parametrization in Section \ref{s:state-history}.
In Section \ref{s:randomized-policy}, we introduce the randomized state feedback policy parametrization and demonstrate its advantages over other parametrizations. 
In Section \ref{s:SDP-statefeedback}, we formulate the problem as an instance of an SDP using a suitable variable transformation. Numerical simulations are presented in Section \ref{s:numerical-experiments}.
Finally, we conclude the paper in Section \ref{s:conclusion}.

\section{Problem Formulation}\label{s:problem-formulation}
\subsection{Notation}
The space of $n$-dimensional real vectors is denoted as $\R{n}$ and the space of $n\times m$ matrices as $\R{n \times m}$. The space of $n\times n$ symmetric, positive semi-definite and positive definite matrices are denoted by $\S{n}$, $\S{n}^{+}$ and $\S{n}^{++}$, respectively. The $n\times n$ identity matrix is denoted as $\mathbf{I}_{n}$ whereas $\mathbf{0}$ denotes the zero matrix (or vector) with appropriate dimension. 
For $A, B \in \S{n}$, $A \succ B$ ($A \succeq B$) means $A - B \in \S{n}^{++}$ ($A - B \in\S{n}^{+}$).
We use $\tr{\cdot}$ to denote the trace operator. 
$\bdiag{A_1, A_2, \dots, A_{N}}$ denotes the block diagonal matrix whose diagonal blocks are the matrices $A_1, A_2, \dots, A_{N}$. Vertical concatenation of vectors $\{ x_i \in \mathbb{R}^n \}_{i=1}^N$ is denoted as $\vertcat{x_0, \dots, x_{N}}$.
$x \sim \mathcal{N}(\mu, \Sigma)$ means that $x$ is a normal (Gaussian) random variable with mean $\mu\in\mathbb{R}^n$ and covariance matrix $\Sigma \in \S{n}^{+}$. The expectation and the covariance of a random variable $x$ are denoted as $\E{x}$ and $\Cov{x}$, respectively.

\subsection{Wasserstein Distance}
The Wassertein distance defines a distance metric in the space of probability distributions over $\R{n}$. 
In particular, for two random variables over $\R{n}$ with probability density functions $\rho_1$ and $\rho_2$, their squared Wasserstein distance is denoted as $W_2^2(\rho_1, \rho_1)$ and defined as:
\begin{align}\label{eq:Wasserstein-definition}
    W^2_2(\rho_1, \rho_2) := \inf_{\sigma \in \mathcal{P}(\rho_1, \rho_2)} \mathbb{E}_{y} [ \lVert x_1 - x_2 \rVert^2_2]
\end{align}
where $\mathcal{P}(\rho_1, \rho_2)$ denotes the space of all probability distributions of the random variable $y = [x_1\t, x_2\t]\t$ over $\R{2n}$ with finite second moments and marginals $\rho_1$ and $\rho_2$ on $x_1$ and $x_2$, respectively. 

Typically, the Wasserstein distance for two arbitrary probability density functions does not admit an analytic expression and its computation can be a hard task. However, if both $\rho_1$ and $\rho_2$ correspond to the densities of two Gaussian distributions with mean vectors $\mu_1, \mu_2 \in \R{n}$ and covariance matrices $\Sigma_1, \Sigma_2 \in \mathbb{S}^{+}_n$, then the Wasserstein distance admits the following closed form expression \cite{p:givens1984class}:
\begin{align}\label{eq:Wasserstein-Gaussian}
    W_2^2 (\rho_1, \rho_2) := & \lVert \mu_1 - \mu_2 \rVert_2^2 + \tr{\Sigma_1 + \Sigma_2} \nonumber \\ 
    & - 2\tr{ \parantheses{\sqrt{\Sigma_2} \Sigma_1 \sqrt{\Sigma_2}}^{1/2} }.
\end{align}

\subsection{Problem Formulation}
Let us consider the following discrete-time stochastic linear system:
\begin{align}\label{eq:system-dynamics}
    x_{k+1} = A_k x_k + B_k u_k + w_k,
\end{align}
where $\{x_k\}_{k=0}^N$ is the state process over $\R{n}$, $\curly{u_k}_{k=0}^{N-1}$ is the control process over $\R{m}$ and $\curly{w_k}_{0}^{N-1}$ is the disturbance process over $\R{n}$. 
In addition, $A_k \in \R{n \times n}$, $B_k \in \R{n \times m}$ are known matrices for all $k$.
We consider the case where $\curly{w_k}_{k=0}^{N-1}$ is a Gaussian white noise process i.e. $\E{w_k} = \bm{0}$ for all $k \in \curly{0, \dots, N-1}$ and $\E{w_k w_\ell\t} = \bm{0}$ for all $k \neq \ell \in \curly{0, \dots, N-1} $ and $\E{w_k w_k\t} = W_k \in \mathbb{S}_n^{+}$. 
We also assume that the initial state $x_0 \sim \mathcal{N}(\mu_0, \Sigma_0)$ and 
$\E{x_0 w_k\t} = \bm{0}$ for all $k \in \curly{0, \dots, N-1}$.

The main objective in this paper is to find a control policy 
such that the distribution of the terminal state $x_N$ under the policy is close to a desired terminal Gaussian distribution in terms of the Wasserstein metric
while minimizing the expected value of the quadratic control cost.
Next, we state the precise problem formulation.

\begin{problem}\label{problem:first formulation}
Let $\mu_0, \mu_\mathrm{d} \in \R{n}$, $\Sigma_0, \Sigma_d \in \mathbb{S}_n^{++}$, $\curly{R_k}_{k=0}^{N-1}$, $N \in \mathbb{Z}^{+}$, $\lambda > 0$ and $\curly{A_k, B_k, W_k}_{k=0}^{N-1}$ be given where $W_k \in \mathbb{S}_n^{+}$ and $R_k \in \mathbb{S}_m^{++}$ for all $k \in \mathbb{Z}^+$. Furthermore, let $\Pi_0$ denote the space of admissible causal control policies $\curly{m_0(X^0), m_1(X^1), \dots, m_{N-1}(X^{N-1})}$ for system \eqref{eq:system-dynamics} with $u_k = m_k(X^k)$ where $X = \curly{x_0, \dots, x_k}$ is the state sequence up to time step $k$. Then, find a policy $\pi \in \Pi_0$ that solves the following problem:
\begin{subequations}\label{eq:infinite-dim-problem}
\begin{align}
    \min_{\pi \in \Pi_0} & ~~ J(\pi) \label{eq:infinite-dim-problem-objective}\\
    \text{s.t.} & ~~ \eqref{eq:system-dynamics}\quad \forall k \in \{0, \dots, N-1 \} \nonumber \\
    & ~~ u_k = m_k(X^k) \\
    & ~~ x_0 \sim \mathcal{N}(\mu_0, \Sigma_0)
\end{align}
\end{subequations}
where $J(\pi) := \E{\sum_{k=0}^{N-1} u_k\t R_k u_k} + \lambda  W_2^2(\rho_N, \rho_\mathrm{d})$. In addition, $\rho_N$ and $\rho_\mathrm{d}$ denote the probability density functions of the terminal state distribution and the desired distribution, respectively.
\end{problem}

\begin{remark}
Note that Problem \ref{problem:first formulation} is an infinite-dimensional problem because the optimization is over $\Pi_0$ which is the set of all causal policies. 
Thus, the variables of Problem \ref{problem:first formulation} are the control laws $m_k(X^k)$ which are functions of state sequences.
\end{remark}

\section{Solution via State History Feedback Policy}\label{s:state-history}
Since Problem \ref{problem:first formulation} is an infinite-dimensional optimization problem, it is generally computationally intractable. 
To improve computational tractability, 
we restrict the set of admissible policies to the one that are affine functions of states visited up to time step $k$ as follows:
\begin{align}\label{eq:affine-policy}
    m^{(1)}_k(X^k) = v^{(1)}_k + \sum_{\tau = 0}^{k} K^{(1)}_{k, \tau} (x_\tau - \Bar{x}_\tau),
\end{align}
where $\Bar{x}_\tau = \E{x_\tau}$. 
We denote the space of policies that are defined as in \eqref{eq:affine-policy} as $\Pi_1$. 
We observe that all policies $\pi \in \Pi_1$ can be parametrized by $v^{(1)}_k \in \R{m}$ and $K^{(1)}_{k,j} \in \R{m \times n}$ for all $k,j \in \curly{0, \dots, N-1}$ with $k \geq j$.

\subsection{Difference of Convex Functions Program Formulation}
Next, we will show that Problem \ref{problem:first formulation} can be recast as a difference of convex functions program (DCP).
To this aim, we will have to introduce the following variables which will facilitate the subsequent discussion and analysis. 
Let us consider the concatenated vectors $\bmx := \vertcat{x_0, \dots, x_N}$, $\bmu := \vertcat{u_0, \dots, u_{N-1}}$, $\bmw := \vertcat{w_0, \dots, w_{N-1}}$ and $\bm{v} := \vertcat{v_0^{(1)}, \dots, v_{N-1}^{(1)}}$.
Then, the relationship between the concatenated vectors are given as follows:
\begin{subequations}\label{eq:compact-dynamics}
\begin{align}
    \bmx & = \mathbf{\Gamma} x_0 + \mathbf{G_u} \bmu + \mathbf{G_w} \bmw, \\
    \bmu & = \bm{v}  + \cK (\bmx - \bm{\Bar{x}}),  
\end{align}
\end{subequations}
where the matrices $\bm{\Gamma} \in \R{n(N+1) \times n}$, $\mathbf{G_u} \in \R{n(N+1)\times mN}$, $\mathbf{G_w} \in \R{n(N+1) \times nN}$ are derived from \eqref{eq:system-dynamics} and $\cK \in \R{mN \times n(N+1)}$ is derived from \eqref{eq:affine-policy}.
Interested readers can refer to \cite{p:bakolas2018covarianceautomatica} for detailed derivations.

Under the state history feedback policy \eqref{eq:affine-policy}, the mean of the concatenated state vector $\bmx$ is an affine function of $\bm{v}$ as 
$ \bm{\Bar{x}} := \bm{\Gamma} \mu_0 + \mathbf{G_u} \bm{v}.$
To be able to express the covariance of $\bmx$ as a convex quadratic function of the decision variables, we utilize the following bijective variable transformation:
\begin{subequations}\label{eq:variable-transformation1}
\begin{align}
    \bmtheta & = \cK (\Imat{n(N+1)} - \mathbf{G_u} \cK)^{-1}, \\
    \cK & = \bmtheta (\Imat{n(N+1)} - \mathbf{G_u} \bmtheta)^{-1}.\label{eq:variable-transformation1-eq2}
\end{align}
\end{subequations}
Using \eqref{eq:compact-dynamics}-\eqref{eq:variable-transformation1}, and the matrix inversion lemma \cite{b:matrixinversionlemma}, we have
\begin{subequations}\label{eq:covXcovu}
\begin{align}
    \Cov{\bmx} & = (\Imat{n(N+1)} + \mathbf{G_u} \bmtheta) \mathbf{\Bar{S}} \times \nonumber  \\
    & \qquad \qquad (\Imat{n(N+1)} + \mathbf{G_u} \bmtheta)\t, \\
    \Cov{\bmu} & = \bmtheta \mathbf{\Bar{S}} \bmtheta\t .
\end{align}
\end{subequations}
By utilizing \eqref{eq:covXcovu}, one can express the objective function $J(\pi)$
of Problem \ref{problem:first formulation} with admissible policy space $\Pi_1 \subset \Pi_0$ in terms of $(\bm{v}, \bmtheta)$ as follows:
\begin{align}
    \Tilde{J}(\bm{v}, \bmtheta) := J_1(\bm{v})+J_2(\bmtheta) + J_3(\bmtheta) - J_4(\bmtheta)
\end{align}
where $J_i$, for $i\in \{1, \dots, 4\}$, are defined as \cite{p:balci2020covariancewasserstein}:
\begin{subequations}\label{eq:Jdecomp}
\begin{align}
    J_1(\bm{v}) & := \bm{v}\t \bm{\mathcal{R}} \bm{v} + \lambda \lVert \mathbf{F} (\mathbf{\Gamma} \mu_0 + \mathbf{G_u} \bm{v}) - \mu_\mathrm{d} \rVert_2^2 \\
    J_2(\bmtheta) & := \tr{\bm{\mathcal{R}} \bmtheta \mathbf{\Bar{S}} \bmtheta\t} \\
    J_3(\bmtheta) & := \lambda \operatorname{tr} \big( \mathbf{F} (\Imat{n(N+1)} + \mathbf{G_u} \bmtheta)~ \mathbf{\Bar{S}} \times \nonumber \\
    & \qquad \qquad (\Imat{n(N+1)} + \mathbf{G_u} \bmtheta)\t \mathbf{F}\t  + \Sigma_\mathrm{d} \big) \\
    J_4(\bmtheta) & := 2 \lambda \operatorname{tr} \bigg( \big( \sqrt{\Sigma_\mathrm{d}} \mathbf{F} (\Imat{n(N+1)} + \mathbf{G_u} \bmtheta) ~ \mathbf{\Bar{S}} \times \nonumber \\
    & \qquad (\Imat{n(N+1)} + \mathbf{G_u} \bmtheta)\t \mathbf{F}\t \sqrt{\Sigma_\mathrm{d}} \big)^{1/2} \bigg)
\end{align}
\end{subequations}
where $\bm{\mathcal{R}} := \bdiag{R_0, \dots, R_{N-1} } \in \mathbb{S}_{mN}^{++}$ and $\mathbf{F} := [\bm{0}, \dots, \bm{0}, \Imat{n}] \in \R{n \times n (N+1)}$. 
\begin{remark}
Note that in \cite{p:balci2020covariancewasserstein}, we showed that the objective function of Problem \ref{problem:first formulation} when the admissible policy space is $\Pi_1$ can be written as a difference of two convex functions, which does not necessarily mean that it is non-convex. 
However, in our more recent work \cite{p:balci2021convexity-wasserstein}, we showed that the objective function $J(\bm{v}, \bmtheta)$ is in general non-convex and admits more than one local minima.
We also provided a sufficient condition under which the objective function becomes convex, but it is not possible to check whether this condition holds without solving Problem \ref{problem:first formulation} first.
\end{remark}

\section{Randomized State Feedback Policy}\label{s:randomized-policy}
In this section, we define a new set of policies which are comprised of randomized affine state feedback policies, denoted as $\Pi_2$.
Each policy $\pi \in \Pi_2$ is a sequence of feedback laws $\{m^{(2)}_0(x_0), m^{(2)}_1(x_1), \dots, m^{(2)}_{N-1}(x_{N-1}) \}$ where each $m^{(2)}_k$ is a function from $\R{n}$ to the set of $m$ dimensional random variables with Gaussian distribution.
The functions $m^{(2)}_k(x_k)$ is realized as follows:
\begin{align}\label{eq:random-state-policy}
    m^{(2)}_k(x_k) = v^{(2)}_k + K^{(2)}_k (x_k - \Bar{x}_k) + n_k
\end{align}
where $n_k \sim \mathcal{N}(\bm{0}, Q^{(2)}_k)$, $\E{n_k x_k\t} = \bm{0}$ and $Q^{(2)}_k \in \mathbb{S}^{+}_{n}$.
Since every policy $\pi^{(2)} \in \Pi_2$ is parametrized by the terms $\{ \Bar{u}^{(2)}_k, K^{(2)}_k, Q^{(2)}_k \}_{k=0}^{N-1} $, solving Problem \ref{problem:first formulation} over $\Pi_2$ is a finite-dimensional optimization problem. 

Given the discrete-time stochastic linear system \eqref{eq:system-dynamics}, the dynamics of the mean and the covariance of the state process $\curly{x_k}_{k=0}^{N} $ under the randomized state feedback policy defined in \eqref{eq:random-state-policy} are given as follows:
\begin{subequations}\label{eq:state-mean-cov-under-random-policy}
\begin{align}
    \mu_{k+1} & = A_k \mu_k + B_k v_k^{(2)}, \label{eq:state-mean-random-policy}\\
    \Sigma_{k+1} & = (A_k + B_k K^{(2)}_k) \Sigma_k (A_k + B_k K^{(2)}_k)\t \nonumber \\
    & \qquad + B_k Q^{(2)}_k B_k\t + W_k, \label{eq:state-cov-under-random-policy}
\end{align}
\end{subequations}
where $\mu_k$ and $\Sigma_k$ denote the mean vector and the covariance matrix of $x_k$ for all $k \in \{0, \dots, N-1\}$, respectively. In addition, let the mean and covariance of $\curly{u_k}_{k=0}^{N-1}$ be denoted as, respectively, $\Bar{u}_k$ and $U_k$ and written in terms of the policy parameters $\curly{v_k^{(2)}, K_k^{(2)}, Q_k^{(2)}}_{k=0}^{N-1}$ as follows:
\begin{align}\label{eq:control-mean-cov-under-random-policy}
    \Bar{u}_k = v_k^{(2)}, \quad
    U_k = K^{(2)}_k \Sigma_k K^{(2)\mathrm{T}}_k + Q^{(2)}_k.
\end{align}

Moreover, the dynamics of the state mean and the state covariance under the state history feedback policy defined in \eqref{eq:affine-policy} are given by:
\begin{subequations}
\begin{align}
    \mu_{k+1} & = A_k \mu_k + B_k v_k^{(1)} \\
    \Sigma_{k+1} & = A_k \Sigma_{k} A_k\t + A_k L_k\t B_k\t  + B_k L_k A_k\t  \nonumber \\
    & \quad + B_k U_k B_k\t + W_k \label{eq:state-cov-under-history}
\end{align}
\end{subequations}
where 
\begin{subequations}\label{eq:LandMdef}
\begin{align}
    L_k & = \E{\Tilde{u}_k \Tilde{x}_k\t} = \E{\sum_{\tau=0}^k K^{(1)}_\tau \Tilde{x}_\tau \Tilde{x}_k\t}, \\
    U_k & = \E{\Tilde{u}_k \Tilde{u}_k\t} = \E{ \sum_{\tau, \ell=0}^{k}  K^{(1)}_\tau \Tilde{x}_\tau \Tilde{x}_\ell\t K^{(1)\mathrm{T}}_\ell}.
\end{align}
\end{subequations}
for all $k \in \{0, \dots, N-1\}$. It is worth noting that the term $L_k $ corresponds to the cross-covariance between the random variables $x_k$ and $u_k$.

\begin{assumption}\label{assumption:non-singular-sigma}
For any policy $\pi \in \Pi_1$, the state covariance $\Sigma_k \in \mathbb{S}_n^{++}$
for all $k \in \curly{0, \dots, N}$.
\end{assumption}

We now show that for any policy $\pi \in \Pi_1$, there exists a policy $\Bar{\pi} \in \Pi_2$ such that the first and the second moments of the control and the state processes that are induced by $\Bar{\pi}$ are equal to the ones that are induced by $\pi$ under Assumption \ref{assumption:non-singular-sigma}. 
This will allow us to conclude that the optimal value of Problem \ref{problem:first formulation} when the admissible policy space is $\Pi_1\subset \Pi_0$ is lower bounded by the optimal value of the same problem when the admissible policy space is $\Pi_2\subset \Pi_0$.
The next lemma and theorem formally states the previous claims about the policy spaces $\Pi_1$ and $\Pi_2$. 


\begin{lemma}\label{lemma:equivalence-policies}
Let Assumption \ref{assumption:non-singular-sigma} hold and let $\curly{\mu^{(1)}_k, \Sigma^{(1)}_k}_{k=0}^N$, $\curly{ \Bar{u}_k^{(1)}, U_k^{(1)} }_{k=0}^{N-1}$ be the mean and covariance  of the state and the control sequences under a policy $\pi \in \Pi_1$.
Then, for all $\pi \in \Pi_1$, there exists a policy $\Bar{\pi} \in \Pi_2$ such that 
\begin{subequations}\label{eq:equivalence-relations}
\begin{align}
    \mu_k^{(1)} &= \mu_k^{(2)}, \quad &\Bar{u}_k^{(1)}  &= \Bar{u}_k^{(2)}, \label{eq:mean-equivalence}\\
    \Sigma_k^{(1)} &= \Sigma_k^{(2)}, \quad & U_k^{(1)}&= U_k^{(2)}, \label{eq:covariance-equivalence}
\end{align}
\end{subequations}
for all $k \in \curly{0, \dots, N}$ where $\curly{\mu^{(2)}_k, \Sigma^{(2)}_k}_{k=0}^N$ and  $\curly{\Bar{u}_k^{(2)}, U_k^{(2)}}_{k=0}^{N-1}$ denote the mean and covariance of the state and the control sequences under a policy $\Bar{\pi} \in \Pi_2$.
\end{lemma}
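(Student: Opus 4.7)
The plan is to construct the required policy $\bar\pi \in \Pi_2$ explicitly from the given $\pi \in \Pi_1$ by matching the per-step dynamics of the mean and covariance, and then to verify the equalities by induction on $k$. Comparing \eqref{eq:state-cov-under-history} with \eqref{eq:state-cov-under-random-policy}, one sees that the two recursions are structurally identical once $L_k$ is replaced by $K_k^{(2)} \Sigma_k$ and $U_k$ is replaced by $K_k^{(2)} \Sigma_k K_k^{(2)\mathrm{T}} + Q_k^{(2)}$. This immediately suggests the definitions
\begin{align*}
v_k^{(2)} := v_k^{(1)}, \quad K_k^{(2)} := L_k^{(1)} \bigl(\Sigma_k^{(1)}\bigr)^{-1}, \quad Q_k^{(2)} := U_k^{(1)} - L_k^{(1)} \bigl(\Sigma_k^{(1)}\bigr)^{-1} L_k^{(1)\mathrm{T}},
\end{align*}
which is well-defined because $\Sigma_k^{(1)} \succ \bm{0}$ by Assumption \ref{assumption:non-singular-sigma}.

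The first thing I would do is check that $Q_k^{(2)} \in \mathbb{S}_n^{+}$, since otherwise $\bar\pi$ would not be a valid element of $\Pi_2$. This follows from the Schur complement lemma applied to the joint covariance matrix
\begin{align*}
\mathrm{Cov}\!\left( \begin{bmatrix} x_k \\ u_k \end{bmatrix} \right) = \begin{bmatrix} \Sigma_k^{(1)} & L_k^{(1)\mathrm{T}} \\ L_k^{(1)} & U_k^{(1)} \end{bmatrix} \succeq \bm{0},
\end{align*}
which is positive semi-definite as an honest covariance matrix; the Schur complement of the $(1,1)$ block is precisely $Q_k^{(2)}$.

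Next I would verify the equalities \eqref{eq:equivalence-relations} by induction on $k$. The base case $k=0$ is immediate from the common initial distribution $x_0 \sim \mathcal{N}(\mu_0, \Sigma_0)$. For the inductive step, assume $\mu_k^{(1)} = \mu_k^{(2)}$ and $\Sigma_k^{(1)} = \Sigma_k^{(2)}$. The mean control matches trivially: $\bar u_k^{(2)} = v_k^{(2)} = v_k^{(1)} = \bar u_k^{(1)}$. For the control covariance, $U_k^{(2)} = K_k^{(2)} \Sigma_k^{(2)} K_k^{(2)\mathrm{T}} + Q_k^{(2)} = L_k^{(1)} (\Sigma_k^{(1)})^{-1} L_k^{(1)\mathrm{T}} + U_k^{(1)} - L_k^{(1)} (\Sigma_k^{(1)})^{-1} L_k^{(1)\mathrm{T}} = U_k^{(1)}$. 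The mean state recursions \eqref{eq:state-mean-random-policy} agree since both depend only on $\mu_k$ and $v_k$. Finally, substituting $K_k^{(2)} \Sigma_k^{(2)} = L_k^{(1)}$ and $K_k^{(2)} \Sigma_k^{(2)} K_k^{(2)\mathrm{T}} + Q_k^{(2)} = U_k^{(1)}$ into the expansion of \eqref{eq:state-cov-under-random-policy} reproduces exactly \eqref{eq:state-cov-under-history}, giving $\Sigma_{k+1}^{(2)} = \Sigma_{k+1}^{(1)}$.

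The main obstacle, to the extent there is one, is the positive semi-definiteness of $Q_k^{(2)}$; once the Schur complement observation is made it is routine, but it is the one place where the nontrivial Assumption \ref{assumption:non-singular-sigma} is used in an essential way (to guarantee that $\Sigma_k^{(1)}$ is invertible so that the Schur complement is even defined). The remainder of the argument is a direct, mechanical matching of the first- and second-moment recursions.
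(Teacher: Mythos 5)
Your proof is correct and follows essentially the same route as the paper: the same explicit construction $v_k^{(2)}=v_k^{(1)}$, $K_k^{(2)}=L_k(\Sigma_k^{(1)})^{-1}$, $Q_k^{(2)}=U_k^{(1)}-L_k(\Sigma_k^{(1)})^{-1}L_k^{\mathrm{T}}$, followed by induction on the mean and covariance recursions. Your additional verification that $Q_k^{(2)}\succeq\bm{0}$, via the Schur complement of the joint covariance of $(x_k,u_k)$, is a worthwhile step that the paper's proof leaves implicit, since membership of $\Bar{\pi}$ in $\Pi_2$ requires $Q_k^{(2)}\in\mathbb{S}_n^{+}$.
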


\begin{proof}
Let us consider a policy $\pi \in \Pi_1$ and let $\{v^{(1)}_k, K^{(1)}_{k, \ell} \}_{k=0, \ell=0}^{N-1, k}$ be the parameters corresponding to this policy.
Our goal is to find a different policy $\Bar{\pi} \in \Pi_2$
with corresponding policy parameters $\{ v_k^{(2)}, K_k^{(2)}, Q_k^{(2)} \}_{k=0}^{N-1}$ such that the mean and covariance of the state and control processes will satisfy \eqref{eq:equivalence-relations}. 
First, let $v_k^{(2)} = v_k^{(1)}$. Because the feedback terms do not affect the dynamics of the mean of the state process $\curly{x_k}$ for both sets of policies, \eqref{eq:mean-equivalence} follows immediately.

Now, let $\Sigma^{(1)}_k, U^{(1)}_k$ be the covariance matrices of $x_k$ and $u_k$ induced by the state history feedback policy defined in \eqref{eq:affine-policy}. Then, let 
\begin{subequations}\label{eq:KandQdef-lemma-proof}
\begin{align}
    K_k^{(2)} &= L_k (\Sigma^{(1)}_k)^{-1}, \\
    Q_k^{(2)} &= U^{(1)}_k - L_k (\Sigma^{(1)}_k)^{-1} L_k\t,
\end{align}
\end{subequations}
where $L_k$ and $M_k$ are defined as in \eqref{eq:LandMdef}. 
Using \eqref{eq:state-cov-under-random-policy} with the parameters $\curly{K^{(2)}_k, Q^{(2)}_k}$, we can write the state covariance dynamics under policy $\Bar{\pi} \in \Pi_2$, which is denoted by $\Sigma^{(2)}_k$, as follows:
\begin{align}\label{eq:TildeSigmaDynamics}
    \Sigma^{(2)}_{k+1} & = A_k \Sigma^{(2)}_k A_k\t + A_k \Sigma^{(2)}_k (\Sigma_k^{(1)})^{-1} L_k\t B_k\t \nonumber \\
    & + B_k L_k (\Sigma_k^{(1)})^{-1} \Sigma^{(2)}_k  A_k\t \nonumber\\
    & + B_k (L_k (\Sigma_k^{(1)})^{-1} \Sigma^{(2)}_k (\Sigma_k^{(1)})^{-1} L_k\t ) B_k\t \nonumber \\
    & + B_k (U^{(1)}_k - L_k (\Sigma_k^{(1)})^{-1}  L_k\t ) B_k\t + W_k.
\end{align}
The initial state covariance is equal for both policies, that is, $\Sigma_0 = \Sigma^{(1)}_0 = \Sigma^{(2)}_0$.
Furthermore, if we assume that $\Sigma_k^{(1)} = \Sigma_k^{(2)}$ as the inductive hypothesis, then it follows that $\Sigma_{k+1}^{(1)} = \Sigma_{k+1}^{(2)}$ by virtue of \eqref{eq:state-cov-under-history} and \eqref{eq:TildeSigmaDynamics}. Thus, we conclude that $\Sigma_k^{(1)} = \Sigma_k^{(2)}$ holds for all $k$ by induction. 
Using \eqref{eq:control-mean-cov-under-random-policy}, the covariance matrix $U_k^{(2)}$ of the control input $u_k$ under $\curly{K_k^{(2)}, Q_k^{(2)}}$ is given by
\begin{align}\label{eq:TildeSigmaUequation}
    U_k^{(2)} &= U^{(1)}_k - L_k (\Sigma^{(1)}_k)^{-1} L_k\t + L_k (\Sigma^{(1)}_k)^{-1} L_k\t \nonumber \\
    & = U_k^{(1)}
\end{align}
which completes the proof.
\end{proof}

\begin{remark}
Note that Lemma \ref{lemma:equivalence-policies} does not only apply to state history feedback policies \eqref{eq:affine-policy}. 
For any linear or nonlinear control policy $\pi \in \Pi_0$ which induces a state process with non-singular covariance matrix for all time steps, there exists a policy $\Tilde{\pi} \in \Pi_2$ which induces state and control processes which have the same first and second moments as the induced state and control processes under policy $\pi \in \Pi_0$.
\end{remark}

\begin{theorem}
Let $J^{\star}_1 = J(\pi_1^\star)$, where $\pi_1^\star \in \Pi_1$ is the minimizer of Problem \ref{problem:first formulation} over $\Pi_1$, and $J_2^\star = J(\pi_2^\star)$, where $\pi_2^\star$ is the minimizer of Problem \ref{problem:first formulation} over $\Pi_2$.
Then, $J^{\star}_2 \leq J_1^\star$. 
\end{theorem}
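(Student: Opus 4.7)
The plan is to exploit Lemma~\ref{lemma:equivalence-policies} constructively: for the optimal policy $\pi_1^\star \in \Pi_1$, build a companion policy $\bar\pi \in \Pi_2$ whose state and control sequences share the same first and second moments, then argue that the objective $J$ depends on the policy \emph{only} through those moments. This immediately gives $J_2^\star \le J(\bar\pi) = J(\pi_1^\star) = J_1^\star$.

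The first step is to decompose $J(\pi) = \mathbb{E}\bigl[\sum_{k=0}^{N-1} u_k\t R_k u_k\bigr] + \lambda W_2^2(\rho_N, \rho_\mathrm{d})$ and handle each piece separately. For the running cost, I would use the identity
\begin{align*}
\E{u_k\t R_k u_k} = \tr{R_k U_k} + \bar u_k\t R_k \bar u_k,
\end{align*}
which depends on the control process exclusively through $\bar u_k$ and $U_k$. Since Lemma~\ref{lemma:equivalence-policies} ensures $\bar u_k^{(1)} = \bar u_k^{(2)}$ and $U_k^{(1)} = U_k^{(2)}$ for every $k$, the running costs induced by $\pi_1^\star$ and $\bar\pi$ coincide.

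The second step, which I expect to be the most delicate, concerns the Wasserstein term. The closed-form expression in \eqref{eq:Wasserstein-Gaussian} applies only when both $\rho_N$ and $\rho_\mathrm{d}$ are Gaussian. I would verify Gaussianity of $\rho_N$ under each policy class: under $\pi_1^\star \in \Pi_1$, $x_N$ is an affine function of the jointly Gaussian variables $(x_0, w_0, \dots, w_{N-1})$, hence Gaussian; under $\bar\pi \in \Pi_2$, the closed-loop dynamics \eqref{eq:state-mean-cov-under-random-policy} are driven by $x_0$, the process noise $\{w_k\}$, and the auxiliary Gaussian noises $\{n_k\}$, so $x_N$ is once again Gaussian. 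Consequently, in both cases $W_2^2(\rho_N, \rho_\mathrm{d})$ reduces to a function of $\mu_N$ and $\Sigma_N$ alone, and Lemma~\ref{lemma:equivalence-policies} guarantees that these terminal moments agree.

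Combining the two pieces yields $J(\bar\pi) = J(\pi_1^\star) = J_1^\star$. Since $\bar\pi$ is a feasible element of $\Pi_2$, the optimal value over $\Pi_2$ satisfies $J_2^\star \le J(\bar\pi) = J_1^\star$, which is the claimed inequality. The one subtlety worth flagging in the write-up is that Assumption~\ref{assumption:non-singular-sigma} must hold along $\pi_1^\star$ for Lemma~\ref{lemma:equivalence-policies} to be applicable; this hypothesis is inherited implicitly in the theorem's statement.
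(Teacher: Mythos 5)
Your proposal is correct and follows essentially the same route as the paper: construct the companion policy $\bar\pi\in\Pi_2$ via Lemma~\ref{lemma:equivalence-policies}, observe that the running cost depends on $u_k$ only through $(\bar u_k, U_k)$ and the Wasserstein term only through $(\mu_N,\Sigma_N)$ once Gaussianity of $x_N$ is established under both policy classes, and conclude $J_2^\star \le J(\bar\pi)=J_1^\star$. Your explicit flagging of Assumption~\ref{assumption:non-singular-sigma} and the Gaussianity check under the randomized policy are slightly more careful than the paper's write-up, but they do not constitute a different argument.
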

\begin{proof}
Let $\pi_1^\star \in \Pi_1$ be the optimal policy that solves Problem~1 over $\Pi_1$ and let $\{v^\star_k, K^\star_{k,\ell} \}_{k=0, \ell=0}^{N-1, k}$ be the corresponding set of parameters. 
Now, let $\curly{v'_k, K'_{k}, Q'_k}_{k=0}^{N-1}$ be the policy parameters of $\pi_2' \in \Pi_2$ which are defined as in \eqref{eq:KandQdef-lemma-proof}.
Then, from Lemma \ref{lemma:equivalence-policies}, we can show that the first two moments of the control and state sequences under policy $\pi_1^\star$ are equal to the ones under policy $\pi_2'$. 
Since both policies are affine functions of the state $x_k$, then the distribution of $x_k$ will be Gaussian at each time step $k$, and the distribution of the terminal state $x_N$ under both policies will also be Gaussian with the same mean  and covariance. Thus, the Wasserstein distance terms in the objective function of Problem \ref{problem:first formulation} are equal for both policies.
Furthermore, the running cost term at time step $k$ is given by:
\begin{align}
    \E{u_k\t R_k u_k} & =  \tr{R_k \E{u_k u_k\t} } \nonumber \\
    & = \Bar{u}_k\t R_k \Bar{u}_k + \tr{R_k U_k}. \label{eq:running-costk}
\end{align}
Thus, the running cost is a function of the first two moments of $u_k$. 
Since the values of $\Bar{u}_k, U_k$ are equal under both policies $\pi'$ and $\pi^\star$, the running cost terms in \eqref{eq:running-costk} are equal at all time steps for both policies.
Thus, we conclude that $J(\pi_2^\star) \leq J(\pi_2') = J(\pi_1^\star)$. 
\end{proof}

\section{SDP formulation for Randomized State Feedback Policy}\label{s:SDP-statefeedback}
In Section \ref{s:randomized-policy}, we showed that the optimal value of Problem \ref{problem:first formulation} over randomized state feedback policies defined in \eqref{eq:random-state-policy} is upper bounded by the optimal value of the same problem over state history feedback policies defined in \eqref{eq:affine-policy}. 
Furthermore, the number of parameters that is required to parametrize the control policies in $\Pi_2$ is less than $\Pi_1$. 
However, Problem \ref{problem:first formulation} can be recast as a difference of convex functions program using the policy class $\Pi_1$ as shown in Section \ref{s:state-history}. 

In this section, we will formulate Problem \ref{problem:first formulation} with admissible policy space $\Pi_2\subset \Pi$ as a convex semi-definite program using a suitable variable transformation. 
In particular, the finite-dimensional problem can be formulated as follows from \eqref{eq:state-mean-cov-under-random-policy}, \eqref{eq:control-mean-cov-under-random-policy}:
\begin{align}\label{eq:problem-NLP-beforeSDP}
    \min_{\substack{v_k, \mu_k, \\ K_k, Q_k, \Sigma_k}} & J_{\mathrm{NLP}}(\curly{v_k, K_k, Q_k}_{k=0}^{N-1})   \\ 
    \text{s.t.} & ~~ \eqref{eq:state-mean-random-policy}, \eqref{eq:state-cov-under-random-policy} \quad \forall k \in \{0, \dots, N-1\} \nonumber
\end{align}
where 
\begin{align}\label{eq:objective-NLP-beforeSDP}
    J_{\mathrm{NLP}} & :=  \sum_{k=0}^{N-1} \Big(  v_k\t R_k v_k + \tr{R_k (K_k \Sigma_k K_k + Q_k) } \Big) \nonumber \\
    & ~~~~ + \lambda \Big(\lVert \mu_N - \mu_\mathrm{d} \rVert_2^2 + \tr{\Sigma_N + \Sigma_\mathrm{d}} \nonumber \\
    & ~~~~~ \qquad -2 \mathrm{tr} \big( (\sqrt{\Sigma_\mathrm{d}} \Sigma_N \sqrt{\Sigma_\mathrm{d}})^{1/2} \big) \Big) 
\end{align}
since $\E{u_k\t R_k u_k} = \E{u_k}\t R_k \E{u_k} + \tr{R_k U_k} $.
The sum of the first two terms of the objective function in \eqref{eq:objective-NLP-beforeSDP} corresponds to the running cost whereas the third term corresponds to the terminal cost which is taken to be the squared Wasserstein distance between the terminal state density and the desired density. The constraints \eqref{eq:state-mean-random-policy} and \eqref{eq:state-cov-under-random-policy} impose the mean and the covariance dynamics given in respective equations.
The optimization problem given in \eqref{eq:problem-NLP-beforeSDP} is a nonlinear program (NLP) which is generally non-convex due to the presence of the terms $\tr{R_k K_k \Sigma_k K_k\t }$ in the running cost and the nonlinear equality constraints \eqref{eq:state-cov-under-random-policy}. 
In order to convexify the latter problem, we first introduce the following variable transformation which is often used in convex optimization based approaches for optimal control and robust control problems for discrete-time linear systems \cite{p:kothare1996robustMPC-LMI, p:chen2015covariance2, p:benedikter2022covariance}:
\begin{align}\label{eq:Pkdef}
    P_k = K_k \Sigma_k.
\end{align}
By introducing the new variable $P_k$ in \eqref{eq:Pkdef}, we can eliminate the bilinear terms in \eqref{eq:state-cov-under-random-policy}. 
Now, observe that the term $K_k \Sigma_k K_k\t$ that appears in \eqref{eq:objective-NLP-beforeSDP} and \eqref{eq:state-cov-under-random-policy} is equal to $P_k \Sigma_k^{-1} P_k\t$ under $\eqref{eq:Pkdef}$.
With this observation, we define a new decision variable $M_k$ as follows:
\begin{align}\label{eq:MKdef}
    M_k = P_k \Sigma_k^{-1} P_k\t + Q_k.
\end{align}
By using $M_k$ and $P_k$, the nonlinear equality constraint \eqref{eq:state-cov-under-random-policy} can be written equivalently as follows:
\begin{align}
    \Sigma_{k+1} & = A_k \Sigma_k A_k\t + A_k P_k\t B_k\t + B_k P_k A_k\t \nonumber \\
    & \qquad + B_k M_k B_k\t + W_k. \label{eq:nonlinear-equality-partA}
\end{align}
The following lemmas will be used to formulate Problem \eqref{eq:problem-NLP-beforeSDP} as a standard SDP.

\begin{lemma}\label{lemma:equivalent-lmi}
Let $X \in \mathbb{S}_n^{+}$, $Y \in \mathbb{S}_m^{++}$ and $N \in \R{n \times m} $. Then, there exists a matrix $Q \in \mathbb{S}_n^{+}$ such that 
\begin{align}\label{eq:lemma-lmi-eq1}
    X = N Y^{-1} N\t + Q
\end{align}
if and only if 
\begin{align}\label{eq:lemma-lmi-eq2}
    \left[ 
    \begin{array}{cc}
    X & N \\
    N\t & Y
    \end{array} 
    \right] \succeq \bm{0}.
\end{align}
\end{lemma}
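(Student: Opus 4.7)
My plan is to recognize this lemma as a direct application of the Schur complement characterization of positive semi-definiteness for block matrices, exploiting the fact that $Y \succ 0$ is invertible.

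First, I would observe that the right-hand side block matrix in \eqref{eq:lemma-lmi-eq2} has the $(2,2)$-block $Y$ which is strictly positive definite by hypothesis. The standard Schur complement lemma therefore states that this block matrix is positive semi-definite if and only if its Schur complement with respect to $Y$, namely $X - N Y^{-1} N\t$, is positive semi-definite. I would invoke this result as a cited fact (it appears, for instance, in any standard reference on convex optimization or LMIs), so no recomputation is needed.

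Given that tool, the two directions become essentially immediate. For the ``only if'' direction, suppose a $Q \in \mathbb{S}_n^{+}$ exists satisfying \eqref{eq:lemma-lmi-eq1}. Then $X - N Y^{-1} N\t = Q \succeq \bm{0}$, so by Schur complement the block matrix in \eqref{eq:lemma-lmi-eq2} is PSD. For the ``if'' direction, suppose \eqref{eq:lemma-lmi-eq2} holds; then by the same Schur complement equivalence, $X - N Y^{-1} N\t \succeq \bm{0}$, so I can simply define $Q := X - N Y^{-1} N\t$, which lies in $\mathbb{S}_n^{+}$ and satisfies \eqref{eq:lemma-lmi-eq1} by construction.

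There is essentially no obstacle here; the only subtle point worth mentioning is that invertibility of $Y$ is required to even form $Y^{-1}$ in \eqref{eq:lemma-lmi-eq1}, and this is precisely why the hypothesis $Y \in \mathbb{S}_m^{++}$ (rather than merely $\mathbb{S}_m^{+}$) is imposed. I would state the proof in two short paragraphs, one per implication, with a single sentence citing the Schur complement result as the main tool. No case analysis, matrix factorization, or eigenvalue argument is needed beyond that citation.
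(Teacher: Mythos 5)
Your proposal is correct and matches the paper's intended argument exactly: the paper omits the proof but notes that the equivalence follows by applying Schur's complement formula to \eqref{eq:lemma-lmi-eq2}, which is precisely the tool you invoke, together with the observation that $Q := X - N Y^{-1} N\t$ witnesses the ``if'' direction. Your remark on why $Y \in \mathbb{S}_m^{++}$ (rather than $\mathbb{S}_m^{+}$) is needed is a sensible addition.
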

\noindent The proof of Lemma \ref{lemma:equivalent-lmi} is straight-forward and therefore is omitted.
The equivalence of \eqref{eq:lemma-lmi-eq1} and \eqref{eq:lemma-lmi-eq2} can be seen by applying Schur's complement formula to \eqref{eq:lemma-lmi-eq2}.
\begin{lemma}\label{lemma:equivalent-varphi}
Let $\varphi : \mathbb{S}_n^{+} \times \mathbb{S}_n^+ \rightarrow \R{}$ be defined as:
\begin{align}\label{eq:varphi-definition}
    \varphi(M, N) := - \tr{(\sqrt{M} N \sqrt{M})^{1/2}}.
\end{align}
Then, $\varphi(M, N)$ can be equivalently written as:
\begin{align}
    \varphi(M, N) & = \min_{L \in \mathcal{M}(N, M)} ~~ - \tr{L}, \label{eq:equivalent-varphi-constr}
\end{align}
where the set $\mathcal{M}(N, M)$ is defined as follows:
\begin{align}
\mathcal{M}_{}= \bigg\{ L \in \mathbb{S}^{+}_n ~|~ \left[ \begin{array}{cc}
         N & M^{-1/2} L  \\
         L M^{-1/2} & \Imat{n} 
    \end{array} \right] \succeq \bm{0} \bigg\}.
\end{align}
\end{lemma}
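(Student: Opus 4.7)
The plan is to reduce the LMI-constrained minimization to a scalar trace inequality whose optimizer admits a closed form in terms of the matrix square root. First, I would apply Schur's complement to the block matrix appearing in $\mathcal{M}(N, M)$, using the identity block $\mathbf{I}_n$ (which is strictly positive definite) as the pivot. Since $L$ is symmetric, $(M^{-1/2} L)\t = L M^{-1/2}$, and the LMI becomes equivalent to
\begin{align*}
N - M^{-1/2} L L M^{-1/2} \succeq \bm{0}, \qquad L \in \S{n}^{+}.
\end{align*}
Conjugating by $M^{1/2}$ on both sides (which preserves the Loewner order since $M^{1/2} \succ 0$), this is equivalent to $L^2 \preceq M^{1/2} N M^{1/2}$.

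Next, I would invoke the operator monotonicity of the matrix square root (the L\"owner--Heinz inequality at exponent $1/2$): for $L \succeq 0$, the inequality $L^2 \preceq M^{1/2} N M^{1/2}$ is equivalent to $L \preceq (M^{1/2} N M^{1/2})^{1/2}$. Taking traces, every feasible $L \in \mathcal{M}(N, M)$ satisfies
\begin{align*}
\tr{L} \leq \tr{\left( M^{1/2} N M^{1/2} \right)^{1/2}} = -\varphi(M, N),
\end{align*}
so $-\tr{L} \geq \varphi(M,N)$ throughout $\mathcal{M}(N, M)$.

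Finally, I would exhibit a minimizer that attains this bound. The candidate $L^{\star} := (M^{1/2} N M^{1/2})^{1/2}$ is symmetric and positive semidefinite, and by construction satisfies $(L^\star)^2 = M^{1/2} N M^{1/2}$, so running the Schur complement step in reverse shows $L^{\star} \in \mathcal{M}(N, M)$. Since it achieves $-\tr{L^\star} = \varphi(M, N)$, the minimum is attained and equality holds in \eqref{eq:equivalent-varphi-constr}.

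The main technical ingredient is the operator monotonicity of the square root, which is the nontrivial classical fact that allows us to pass from $L^2 \preceq S$ to $L \preceq S^{1/2}$ (note that this would fail for powers larger than one); everything else is a routine Schur-complement manipulation. A minor well-posedness point is that the LMI involves $M^{-1/2}$, so implicitly $M$ must be positive definite for $\mathcal{M}(N,M)$ to be defined; this is consistent with the paper's subsequent use, where $M$ plays the role of the strictly positive definite desired terminal covariance $\Sigma_\mathrm{d}$.
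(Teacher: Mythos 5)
Your proof is correct and follows essentially the same route as the paper's: Schur complement on the LMI, congruence by $M^{1/2}$, operator monotonicity of the matrix square root to bound $\tr{L}$ from above, and direct verification that $L^{\star} = (M^{1/2} N M^{1/2})^{1/2}$ is feasible and attains the bound. If anything, your justification of the monotonicity step is more careful than the paper's (which attributes it to monotonicity of $X \mapsto X^{2}$ rather than of $X \mapsto X^{1/2}$); the one nitpick is that $L^{2} \preceq S$ and $L \preceq S^{1/2}$ are not genuinely equivalent for $L \succeq \bm{0}$ --- only the forward implication holds, since squaring is not operator monotone --- but you only use that direction and check feasibility of $L^{\star}$ directly, so nothing breaks.
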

\begin{proof}
Let us consider the following optimization problem:
\begin{subequations}\label{problem:proof-equivalent-varphi}
\begin{align}
    \min_{L \in \mathbb{S}_n^+} & - \tr{L} \\
    \text{s.t.}~~ & (\sqrt{M} N \sqrt{M})^{1/2} \succeq L \label{eq:proof-equivalent-varphi-constr}
\end{align}
\end{subequations}
It follows readily that the minimizer $L^\star$ of the latter problem is equal to $(\sqrt{M} N \sqrt{M})^{1/2}$. Thus, the optimal value of the objective function of the optimization problem in \eqref{problem:proof-equivalent-varphi} is equal to the value of $\varphi(M,N)$ which is defined in \eqref{eq:varphi-definition}. Therefore,
\begin{subequations}
\begin{align}
    \eqref{eq:proof-equivalent-varphi-constr} & \Leftrightarrow \sqrt{M} N \sqrt{M} \succeq L^2 \label{eq:relation-proof1}\\
    & \Leftrightarrow N \succeq M^{-1/2} L \Imat{n} L M^{-1/2} \label{eq:relation-proof2}\\
    & \Leftrightarrow \left[ \begin{array}{cc}
         N & M^{-1/2} L  \\
         L M^{-1/2} & \Imat{n} 
    \end{array} \right] \succeq \bm{0} \label{eq:relation-proof3}
\end{align}
\end{subequations}
The relation in \eqref{eq:relation-proof1} holds since the function $f(X) = X^{2}$ is monotonically non-decreasing for all $X \in \mathbb{S}_n^+$ in the L\"{o}wner partial order sense. 
The relation in \eqref{eq:relation-proof2} holds due to the congruence transform with $M^{-1/2}$.
The final step of the proof in \eqref{eq:relation-proof3} follows from direct application of Schur's complement formula to \eqref{eq:relation-proof2}.
\end{proof}
Using Lemma \ref{lemma:equivalent-lmi} and Lemma  \ref{lemma:equivalent-varphi}, we can write the problem in \eqref{eq:problem-NLP-beforeSDP} as a standard SDP as follows:

\begin{subequations}\label{problem:sdp-formulation}
\begin{align}
    \min_{\substack{v_k, \mu_k, L \\ P_k, \Sigma_k, M_k}} &  J_{\mathrm{SDP}}(\curly{v_k, P_k,  M_k}_{k=0}^{N-1}, L)\\
    \text{s.t.}~~ &  \eqref{eq:state-mean-random-policy}, \eqref{eq:nonlinear-equality-partA} \nonumber\\
    & \left[ \begin{array}{cc}
         M_k & P_k   \\
         P_k\t & \Sigma_k 
    \end{array} \right] \succeq \bm{0} \label{eq:SDP-Mk-constr} \\
    & \left[ \begin{array}{cc}
        \Sigma_N & \Sigma_\mathrm{d}^{-1/2} L   \\
        L \Sigma_\mathrm{d}^{-1/2}& \Imat{n}
    \end{array} \right] \succeq \bm{0} \\
    & ~ L, \Sigma_k, M_k \succeq \bm{0}  \label{eq:SDP-PSD-constr}
\end{align}
\end{subequations}
where the constraints \eqref{eq:state-mean-random-policy}, \eqref{eq:nonlinear-equality-partA}, \eqref{eq:SDP-Mk-constr} and \eqref{eq:SDP-PSD-constr} are imposed for all $k \in \{0, \dots, N-1 \}$. 
In addition, the objective function $J_{\mathrm{SDP}}$ is defined as follows:
\begin{align}
    J_{\mathrm{SDP}} := & \sum_{k=0}^{N-1} v_k\t R_k v_k + \tr{R_k M_k}  \nonumber \\
    & \hspace{-0.50cm} + \lambda \big(\lVert \mu_N - \mu_\mathrm{d} \rVert_2^2 + \tr{ \Sigma_N + \Sigma_\mathrm{d} - 2 L } \big) & 
\end{align}

\begin{theorem}\label{theorem:equivalenceofSDPandNLP}
Let $\curly{\mu'_k, v'_k, \Sigma'_k, K'_k, Q'_k}$ be the minimizer of the NLP in \eqref{eq:problem-NLP-beforeSDP}, and let $\{\mu''_k, v''_k, \Sigma''_k, P''_k, M''_k, L \}$ be the minimizer of the SDP in \eqref{problem:sdp-formulation}. Then,
$v'_k = v''_k$, $\mu'_k = \mu''_k$, $\Sigma'_k = \Sigma''_k, K'_k = P''_k (\Sigma''_k)^{-1}, Q'_k = M''_k - P''_k (\Sigma''_k)^{-1} P^{\prime \prime \mathrm{T}}_k$, for all $k \in \{0, \dots, N-1\}$.
\end{theorem}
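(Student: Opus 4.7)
The plan is to establish that the NLP in \eqref{eq:problem-NLP-beforeSDP} and the SDP in \eqref{problem:sdp-formulation} have equal optimal values by exhibiting feasibility-preserving maps in both directions that relate the objectives, and then to read off the stated identities between minimizers from the tightness conditions at optimality.

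First, I would set up the forward direction (NLP $\to$ SDP). Given any feasible NLP tuple $\{v_k,\mu_k,\Sigma_k,K_k,Q_k\}$, define $P_k := K_k \Sigma_k$, $M_k := K_k \Sigma_k K_k\t + Q_k$, and $L := (\sqrt{\Sigma_\mathrm{d}}\, \Sigma_N \sqrt{\Sigma_\mathrm{d}})^{1/2}$. Since $M_k = P_k \Sigma_k^{-1} P_k\t + Q_k$ with $Q_k \succeq \bm{0}$, Lemma \ref{lemma:equivalent-lmi} gives the LMI \eqref{eq:SDP-Mk-constr}; the choice of $L$ makes the $L$-block LMI hold by Lemma \ref{lemma:equivalent-varphi}. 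Substituting $P_k, M_k$ into \eqref{eq:nonlinear-equality-partA} reproduces \eqref{eq:state-cov-under-random-policy}, so the covariance dynamics coincide, and the mean dynamics are unchanged. A term-by-term comparison then shows $\tr{R_k M_k} = \tr{R_k(K_k \Sigma_k K_k\t + Q_k)}$ and $-2\tr{L} = -2\tr{(\sqrt{\Sigma_\mathrm{d}} \Sigma_N \sqrt{\Sigma_\mathrm{d}})^{1/2}}$, so $J_{\mathrm{SDP}} = J_{\mathrm{NLP}}$ at the image point. This yields $J_{\mathrm{SDP}}^\star \leq J_{\mathrm{NLP}}^\star$.

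Next, I would do the reverse map (SDP $\to$ NLP). Given any feasible SDP tuple $\{v_k,\mu_k,\Sigma_k,P_k,M_k,L\}$, invertibility of $\Sigma_k$ (guaranteed by $\Sigma_0 \succ \bm{0}$ and, if needed, by the covariance recursion with $W_k$) allows me to set $K_k := P_k \Sigma_k^{-1}$ and $Q_k := M_k - P_k \Sigma_k^{-1} P_k\t$. Applying the Schur complement to \eqref{eq:SDP-Mk-constr}, Lemma \ref{lemma:equivalent-lmi} gives $Q_k \succeq \bm{0}$, and algebraic rearrangement of \eqref{eq:nonlinear-equality-partA} recovers \eqref{eq:state-cov-under-random-policy}, so the reconstructed tuple is NLP-feasible. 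The running-cost terms again match, whereas Lemma \ref{lemma:equivalent-varphi} gives $-2\tr{(\sqrt{\Sigma_\mathrm{d}} \Sigma_N \sqrt{\Sigma_\mathrm{d}})^{1/2}} \leq -2\tr{L}$, yielding $J_{\mathrm{NLP}} \leq J_{\mathrm{SDP}}$ at the image point, hence $J_{\mathrm{NLP}}^\star \leq J_{\mathrm{SDP}}^\star$.

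Combining the two bounds gives $J_{\mathrm{NLP}}^\star = J_{\mathrm{SDP}}^\star$. Specializing the reverse map to the SDP minimizer produces an NLP feasible tuple that attains the common optimum; setting it equal to the NLP minimizer gives the announced identities $v'_k = v''_k$, $\mu'_k = \mu''_k$, $\Sigma'_k = \Sigma''_k$, $K'_k = P''_k (\Sigma''_k)^{-1}$, and $Q'_k = M''_k - P''_k(\Sigma''_k)^{-1} P^{\prime\prime \mathrm{T}}_k$. At this optimum the Lemma \ref{lemma:equivalent-varphi} inequality must be tight (else one could strictly decrease $J_{\mathrm{SDP}}$ by increasing $L$), pinning down $L = (\sqrt{\Sigma_\mathrm{d}}\, \Sigma''_N \sqrt{\Sigma_\mathrm{d}})^{1/2}$. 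The main obstacle I foresee is justifying invertibility of $\Sigma_k$ uniformly on the feasible set (handled via an analogue of Assumption \ref{assumption:non-singular-sigma} or via the cascade structure of the covariance recursion) and carefully phrasing the tightness argument if the NLP or SDP minimizer is non-unique; the remaining work is algebraic bookkeeping through the $P_k$–$K_k$ and $M_k$–$Q_k$ substitutions.
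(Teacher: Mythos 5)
Your proposal is correct and follows essentially the same route as the paper: the substitutions $P_k = K_k\Sigma_k$, $M_k = P_k\Sigma_k^{-1}P_k\t + Q_k$, Lemma \ref{lemma:equivalent-lmi} for the equivalence between existence of $Q_k \succeq \bm{0}$ and the LMI \eqref{eq:SDP-Mk-constr}, and Lemma \ref{lemma:equivalent-varphi} for replacing the Wasserstein trace term by the epigraph variable $L$. Your two-directional feasibility-preserving maps and the explicit tightness argument for $L$ merely make rigorous what the paper's proof asserts as term-by-term equivalences, and the invertibility-of-$\Sigma_k$ caveat you raise is one the paper also leaves to Assumption \ref{assumption:non-singular-sigma}.
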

\begin{proof}
The equalities $\mu'_k = \mu''_k$ and $v'_k = v''_k$ follow immediately given that the problem in \eqref{problem:sdp-formulation} and the problem in \eqref{eq:problem-NLP-beforeSDP} share the constraint in \eqref{eq:state-mean-random-policy} that enforces the state mean dynamics and in addition, the terms related to the mean dynamics in the objective functions $J_{\mathrm{NLP}}$ and $J_{\mathrm{SDP}}$ are equal. By the definition of $M_k$ in \eqref{eq:MKdef}, it follows that $\tr{R_k M_k } = \tr{R_k (K_k \Sigma_k K_k\t + Q_k)}$. By utilizing the definition of $M_k$, the state covariance dynamics in \eqref{eq:state-cov-under-random-policy} can be equivalently expressed as in \eqref{eq:MKdef} and \eqref{eq:nonlinear-equality-partA}. 
Furthermore, we observe that \eqref{eq:SDP-Mk-constr} is equivalent to \eqref{eq:MKdef} from Lemma \ref{lemma:equivalent-lmi}. 
Finally, the last term of the objective function of the NLP in \eqref{eq:objective-NLP-beforeSDP} can be expressed as the optimal value of a relevant minimization problem in view of \eqref{lemma:equivalent-varphi}. 
Hence, the SDP problem in \eqref{problem:sdp-formulation} is equivalent to the NLP problem in \eqref{eq:problem-NLP-beforeSDP}.
\end{proof}

\begin{remark}\label{remark:separable-problems}
The problem given in \eqref{problem:sdp-formulation} can be decomposed into two optimization problems. 
One for steering the mean and one for steering the covariance.
This is because the objective function $J_{\mathrm{SDP}}$ is equal to the sum of $J_{\mathrm{SDP},\mathrm{mean}}$ and $J_{\mathrm{SDP}, \mathrm{cov}}$, where $J_{\mathrm{SDP},\mathrm{mean}} = \sum_{k=0}^{N-1} v_k^{\mathrm{T}} R_k v_k + \lambda \lVert \mu_N - \mu_{\mathrm{d}} \rVert_2^2$ and $\curly{v_k}_{k=0}^{N-1}$ and $J_{\mathrm{SDP}, \mathrm{cov}} = \sum_{k=0}^{N-1} \tr{R_k M_k} + \lambda \tr{\Sigma_N + \Sigma_d - 2 L}$.
The mean steering problem can be formulated as an equality constrained convex quadratic program as follows:
\begin{align*}
    \min_{v_k, \mu_k} ~& \sum_{k=0}^{N-1} v_k^{\mathrm{T}} R_k v_k + \lambda \lVert \mu_N - \mu_{\mathrm{d}} \rVert_2^2 \\
    \text{s.t.} ~ & \eqref{eq:state-mean-random-policy}
\end{align*}
\end{remark}

Although the proposed control policy~\eqref{eq:random-state-policy} for the solution to the CS problem corresponds to a randomized state feedback policy, it turns out that the optimal policy for the Problem in \eqref{problem:sdp-formulation} corresponds to a deterministic policy, i.e., $Q_k = \bm{0}$ for all $k$. 
This observation is formally stated in the following theorem.
\begin{theorem}
Let us assume that $A_k$ is non-singular for all $k \in \{0,\dots, N-1\}$ and let $\pi' \in \Pi_2$ be the optimal policy that solves the optimization problem in \eqref{eq:problem-NLP-beforeSDP} and let 
$\curly{v_k, K_k, Q_k}_{k=0}^{N-1}$ be its corresponding parameters. Then, $Q_k = \bm{0}$ for all $k \in \curly{0, \dots, N-1}$.
\end{theorem}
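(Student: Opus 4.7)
The plan is to argue by contradiction within the SDP \eqref{problem:sdp-formulation}, exploiting the equivalence with the NLP \eqref{eq:problem-NLP-beforeSDP} established in Theorem \ref{theorem:equivalenceofSDPandNLP}. Suppose $(v_k^*, \mu_k^*, P_k^*, M_k^*, \Sigma_k^*, L^*)$ is a minimizer of the SDP and assume for contradiction that the residual $Q_{k_0}^* := M_{k_0}^* - P_{k_0}^* (\Sigma_{k_0}^*)^{-1} (P_{k_0}^*)\t$ is nonzero for some index $k_0$. I would exhibit a one-parameter family of feasible perturbations, indexed by a small scalar $\epsilon>0$, that strictly decreases $J_{\mathrm{SDP}}$. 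Because this contradicts optimality, the conclusion $Q_k = \bm{0}$ for every $k$ then follows.

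The perturbation I propose modifies only two decision variables at time $k_0$,
\begin{align*}
M_{k_0}' &= M_{k_0}^* - \epsilon Q_{k_0}^*, \\
P_{k_0}' &= P_{k_0}^* + \tfrac{\epsilon}{2} Q_{k_0}^* B_{k_0}\t A_{k_0}^{-\mathrm{T}},
\end{align*}
and leaves every other variable, including all $\Sigma_k$ and $L$, unchanged; here $A_{k_0}^{-\mathrm{T}} := (A_{k_0}^{-1})\t$ is well-defined by the nonsingularity hypothesis. Substituting these into the covariance dynamics equality \eqref{eq:nonlinear-equality-partA} at $k_0$, each of the quantities $A_{k_0}(P_{k_0}' - P_{k_0}^*)\t B_{k_0}\t$ and $B_{k_0}(P_{k_0}' - P_{k_0}^*) A_{k_0}\t$ evaluates to $\tfrac{\epsilon}{2} B_{k_0} Q_{k_0}^* B_{k_0}\t$ via the cancellation $A_{k_0} A_{k_0}^{-1} = \Imat{n}$, while $B_{k_0}(M_{k_0}' - M_{k_0}^*) B_{k_0}\t = -\epsilon B_{k_0} Q_{k_0}^* B_{k_0}\t$, so the three contributions cancel exactly. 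Hence $\Sigma_{k_0+1}$ is preserved; consequently so is every subsequent $\Sigma_k$, and the LMI involving $L$ remains satisfied. The constraint $M_{k_0}' \succeq \bm{0}$ follows from $M_{k_0}^* \succeq Q_{k_0}^*$.

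What remains is to verify the LMI \eqref{eq:SDP-Mk-constr} at $k_0$ and compute the change in objective. Applying Schur's complement (Lemma \ref{lemma:equivalent-lmi}) together with $\Sigma_{k_0}^* \succ \bm{0}$, the LMI reduces to
\begin{align*}
(1-\epsilon) Q_{k_0}^* - \tfrac{\epsilon}{2}\bigl(C Q_{k_0}^* + Q_{k_0}^* C\t\bigr) - \tfrac{\epsilon^2}{4} Q_{k_0}^* D Q_{k_0}^* \succeq \bm{0},
\end{align*}
with $C := K_{k_0}^* A_{k_0}^{-1} B_{k_0}$, $D := B_{k_0}\t A_{k_0}^{-\mathrm{T}} (\Sigma_{k_0}^*)^{-1} A_{k_0}^{-1} B_{k_0}$, and $K_{k_0}^* := P_{k_0}^* (\Sigma_{k_0}^*)^{-1}$. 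When $Q_{k_0}^* \succ \bm{0}$ the leading term dominates the $O(\epsilon)$ and $O(\epsilon^2)$ corrections, so the inequality holds for all sufficiently small $\epsilon>0$; meanwhile $J_{\mathrm{SDP}}$ decreases by the strictly positive amount $\epsilon \tr{R_{k_0} Q_{k_0}^*}$, producing the desired contradiction.

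The main difficulty I anticipate is the rank-deficient case: when $Q_{k_0}^*$ has a nontrivial kernel $\mathcal{N}$, off-diagonal blocks of $C Q_{k_0}^* + Q_{k_0}^* C\t$ coupling $\mathcal{N}$ to $\mathcal{N}^\perp$ may destroy positive semi-definiteness of the displayed expression, since the leading term vanishes on $\mathcal{N}$. To absorb this case, I would augment the perturbation of $P_{k_0}$ by an additive correction that lies in the kernel of the linear map $X \mapsto A_{k_0} X\t B_{k_0}\t + B_{k_0} X A_{k_0}\t$, so the dynamics equality is untouched, yet cancels the offending off-diagonal blocks; as a fallback, $Q_{k_0}^*$ can be decomposed into rank-one summands and the construction applied along directions for which $C$ preserves $\ker(Q_{k_0}^*)$. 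Either refinement still produces a strictly decreasing cost and completes the contradiction.
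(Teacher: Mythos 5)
Your construction is a genuinely different route from the paper's, which freezes the optimal covariance sequence, isolates the single-stage subproblem of minimizing $\tr{R_\ell M_\ell}$ subject to the stage covariance update and the LMI \eqref{eq:SDP-Mk-constr}, and extracts $M_\ell - P_\ell \Sigma_\ell^{-1} P_\ell\t = \bm{0}$ from the dual problem, complementary slackness, and $R_\ell \succ \bm{0}$. Your primal perturbation is verified correctly as far as it goes: the three increments in \eqref{eq:nonlinear-equality-partA} do cancel, the cost drops by exactly $\epsilon\,\tr{R_{k_0} Q_{k_0}^*} > 0$, and your Schur-complement expression $(1-\epsilon)Q^* - \tfrac{\epsilon}{2}(CQ^* + Q^*C\t) - \tfrac{\epsilon^2}{4}Q^*DQ^*$ is the right one. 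So the argument is complete when $Q_{k_0}^* \succ \bm{0}$.

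The singular case is a genuine gap, not a technicality, and the two remedies you sketch do not close it. For $v \in \ker Q_{k_0}^*$ the diagonal entry $v\t(\cdot)v$ of your perturbed Schur complement is exactly zero (every term has $Q^*$ adjacent to $v$), while the off-diagonal entry against $w$ reduces to $-\tfrac{\epsilon}{2}\,v\t C Q^* w$; a positive semi-definite matrix with a zero diagonal entry must have the entire corresponding row zero, so feasibility fails for \emph{every} $\epsilon > 0$ whenever $C$ does not map $\operatorname{range}(Q_{k_0}^*)$ into itself --- and there is no reason it should, since $C = K_{k_0}^* A_{k_0}^{-1}B_{k_0}$ is dictated by the (unknown) optimum. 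Your first fix requires a correction $\Delta P$ with $B_{k_0}\Delta P A_{k_0}\t$ skew-symmetric; that subspace has dimension governed by $\operatorname{Rank}(B_{k_0})$ (it is trivial when $\operatorname{Rank}(B_{k_0}) = 1$) and you give no argument that the map $\Delta P \mapsto \Delta P\, K_{k_0}^{*\mathrm{T}} + K_{k_0}^*\Delta P\t$ restricted to it can reach the specific off-diagonal block you need to cancel. The rank-one fallback founders on the same obstruction: for a direction $qq\t$ with $q \in \operatorname{range}(Q_{k_0}^*)$ the offending cross term reappears as $-\tfrac{\epsilon}{2}(v\t C q)(q\t w)$, and if $C$ preserved $\ker Q_{k_0}^*$ there would be nothing to fix in the first place. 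To cover singular $Q_{k_0}^*$ you need a different mechanism; the paper's dual/complementary-slackness argument handles all ranks uniformly, which is precisely why it is phrased that way.
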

\begin{proof}
Let us assume, for the sake of contradiction, that $Q_\ell \neq \bm{0}$ for some $\ell \in \curly{0, \dots, N-1}$. 
Let $\curly{\Sigma_k}_{k=0}^{N}$ be the induced state covariance sequence under the optimal policy $\pi'$, and $M_\ell, P_\ell$ be defined as in \eqref{eq:Pkdef}, \eqref{eq:MKdef}, respectively. 
Let us now consider the following optimization problem:
\begin{subequations}\label{eq:theorem3-problem}
\begin{align}
    \min_{M_\ell, P_\ell} & ~ \tr{R_\ell M_\ell } \\
    \text{s.t.} & ~ \Sigma_{\ell+1} = A_\ell \Sigma_\ell A_\ell\t + A_\ell P_\ell\t B_\ell\t \nonumber \\
    & \qquad ~~~~ + B_\ell P_\ell A_\ell\t + B_\ell M_\ell B_\ell\t + W_\ell \\
    & ~ \begin{bmatrix} 
    M_\ell & P_\ell \\
    P_\ell\t & \Sigma_\ell
    \end{bmatrix} \succeq \bm{0}
\end{align}
\end{subequations}
which can be written in a more compact way after dropping the subscripts as follows:
\begin{subequations}\label{eq:theorem3-problem-compact}
\begin{align}
    \min_{M, P} & ~ \tr{R M} \\
    \text{s.t.} & ~ Z = \Sigma + P\t Y\t + Y P + Y M Y\t \\
    & \begin{bmatrix}
    M & P \\
    P\t & \Sigma
    \end{bmatrix} \succeq \bm{0}
\end{align}
\end{subequations}
where $Z = A_\ell^{-1}(\Sigma_{\ell+1} - W_\ell)A_\ell^{-\mathrm{T}}$ and $Y = A_\ell^{-1} B_\ell$.
Furthermore, the dual problem associated with the problem in  \eqref{eq:theorem3-problem-compact} is given by
\begin{subequations}
\begin{align}
    \max_{H \in \mathbb{S}_n, Q \in \mathbb{S}_n^{+}} & ~ \tr{H(Z-\Sigma) - Q_{22}\Sigma} \label{eq:theorem3-dual-objective}\\
    \text{s.t.} & ~ Q_{11} - (R-Y H Y\t) = \bm{0} \label{eq:theorem3-dual-constr1} \\
    & ~  Q_{12}\t = -HY \label{eq:theorem3-dual-constr2} 
\end{align}
\end{subequations}
where $Q = \left [ \begin{smallmatrix} Q_{11} & Q_{12} \\ Q_{12}\t & Q_{22} \end{smallmatrix} \right]$.
From complementary slackness, we conclude that the optimal values of the primal and dual variables satisfy the following equality:
\begin{align}\label{eq:comp-slack-first}
    Q 
    \begin{bmatrix}
    \Imat{m} & P \Sigma^{-1} \\
    \bm{0} & \Imat{n}
    \end{bmatrix}
    \begin{bmatrix}
    M - P \Sigma^{-1} P\t & \bm{0} \\
    \bm{0} & \Sigma
    \end{bmatrix}= \bm{0}
\end{align}
since $\Sigma$ is non-singular in view of Assumption~\ref{assumption:non-singular-sigma}.
From \eqref{eq:comp-slack-first}, we obtain:
\begin{subequations}\label{eq:comp-slack-eqns}
\begin{align}
    Q_{11} (M - P \Sigma^{-1} P\t) = \bm{0} \label{eq:comp-slack1}\\
    Q_{12}\t (M - P \Sigma^{-1} P\t) = \bm{0} \label{eq:comp-slack3}
\end{align}
\end{subequations}
Now, let $\mathcal{M} = M - P \Sigma^{-1} P\t$. By multiplying both sides of dual feasibility constraint \eqref{eq:theorem3-dual-constr1} by $\mathcal{M}$ on the left, we obtain:
\begin{subequations}
\begin{align}
    \mathcal{M}(Q_{11} - (R - YHY\t)) & = \bm{0}  \\
    \mathcal{M}Q_{11} + \mathcal{M}YHY\t & = \mathcal{M}R,
\end{align}
\end{subequations}
where in our derivations we have used that $\mathcal{M} Q_{11} = \bm{0}$ from \eqref{eq:comp-slack1} and $\mathcal{M} Y H Y\t = \bm{0}$ from \eqref{eq:theorem3-dual-constr2} and \eqref{eq:comp-slack3}. We obtain $\mathcal{M}R=\bm{0}$. Since $R \succ \bm{0}$, we have $\mathcal{M} = \bm{0}$. 
Thus, we have $M_\ell - P_\ell \Sigma_\ell P_\ell\t = Q_\ell = \bm{0}$.
This contradicts our initial assumption that there exists $\ell \in \curly{0, \dots, N-1}$ such that $Q_\ell \neq \bm{0}$ and thus, we conclude that $Q_k = \bm{0}$ for all $k$.
This completes the proof.
\end{proof}



\section{Numerical Experiments}\label{s:numerical-experiments}
We solve the SDP in \eqref{problem:sdp-formulation} for the linear system \eqref{eq:system-dynamics} with parameters: 
   $A_k = \left[\begin{smallmatrix} 1.0 & 0.175 \\ -0.343 & 1.0 \end{smallmatrix} \right]$, 
    $B_k = \left[\begin{smallmatrix} 0.789 \\ 0.415 \end{smallmatrix}\right]$, 
$W_k = 0.5 \Imat{2}$, $\mu_0 = [4.0, 3.0]\t$, $\Sigma_0 = \Imat{2}$, $\mu_\mathrm{d} = [0, 0]\t$, $\Sigma_\mathrm{d} = \left[ \begin{smallmatrix} 3 & -2 \\ -2 & 3 \end{smallmatrix} \right]$, $R_k = 1.0$, $N =60$ to generate the experiment results in Fig. \ref{fig:3d-state-evolution}
and Fig. \ref{fig:comparison-LMIwithPrevious}.
We also consider a two-dimensional and a three-dimensional double integrator system with $A_k = \left[\begin{smallmatrix}\Imat{n} & \Delta t \Imat{n}\\ \bm{0} & \Imat{n}\end{smallmatrix}\right]$, $B_k = \left[ \begin{smallmatrix} \bm{0} \\ \Imat{n} \end{smallmatrix} \right]$
with $\Delta t = 0.05$ and $n = 2,3$ to show that our SDP formulation in \eqref{problem:sdp-formulation} can handle CS problems with higher dimensional systems.
The results of these experiments are given in Table \ref{tab:comp_LMI}. 
In all of our experiments, we use MOSEK \cite{mosek} to solve SDPs along with the modeling tool CVXPY \cite{p:diamond2016cvxpy}.


\begin{filecontents*}{computation_time_lmi_converter1.csv}
N,lambda05,lambda10,lambda20
10,0.339082956,0.284004927,0.26281333
20,0.739087105,0.755548,0.73914814
30,1.446320295,1.450273991,1.470589161
40,2.413897991,2.441250801,2.44666028
50,3.601024151,3.658191919,3.595961094
60,5.133867025,5.04340601,5.062376976
70,7.349427938,6.826979876,6.729275942
80,8.758314133,8.585419655,8.603982925
90,10.8027389,10.77249098,10.86991405
100,13.5655849,13.52213192,13.48772192
110,16.05264211,16.10068703,16.04043412
120,19.06683016,19.1065371,19.12045598
130,22.32520604,22.25987601,22.35274315
140,25.65384412,25.79431701,25.74085307
150,29.12979078,29.50909686,29.45468879
\end{filecontents*}

\begin{filecontents*}{computation_converter_all.csv}
N,disturbance-simple,disturbance-half,disturbance-full,auxilary-simple,auxilary-half,auxilary-full,state-fb
5,0.830355883,0.989247084,1.850781918,0.682202101,1.448209047,1.67929697,0.125773191
10,0.819338083,1.653957129,3.632071257,0.650616169,3.242392063,3.914566994,0.299442768
15,1.050813913,2.783089161,5.012301207,0.840897799,5.594646215,7.629346609,0.4786551
20,1.265166998,4.498430014,7.653044939,1.099174976,10.74225426,15.15308809,0.754648924
25,1.568985224,7.152267933,11.04428792,1.488535166,20.34611773,29.97761583,1.102584124
30,1.913541794,10.58964777,15.89444518,1.957868099,37.91025996,53.56118774,1.517044067
35,2.551277161,15.35567522,23.47494674,2.670675039,63.04525399,95.48632813,1.974799156
40,3.242448807,22.5913043,32.60967588,3.502398968,106.081032,163.2109299,2.482071161
\end{filecontents*}

\begin{filecontents*}{optimal_converter_all.csv}
N,disturbance-simple,disturbance-half,disturbance-full,auxilary-simple,auxilary-half,auxilary-full,state-fb
5,0.427747127,0.382233093,0.211088635,0.247005732,0.212589997,0.211088635,0.210797098
10,0.420851789,0.220229588,0.146602988,0.424023463,0.148527109,0.146602985,0.14631577
15,0.464742058,0.189062996,0.144157868,0.697481608,0.147826352,0.144157868,0.143857714
20,0.527993816,0.174502457,0.149224524,0.965094454,0.152034356,0.149224521,0.148927277
25,0.594619714,0.176249203,0.15481484,1.207069156,0.157771488,0.154814839,0.154519537
30,0.660946636,0.176501492,0.159873434,1.420954137,0.162102213,0.159873434,0.159574312
35,0.72633771,0.180352349,0.164282636,1.608637539,0.166500792,0.164282634,0.163975568
40,0.790967987,0.182334469,0.168080197,1.772451927,0.169853509,0.168080196,0.167781149
\end{filecontents*}

\begin{filecontents*}{LMI_computation.csv}
N,lambda02,lambda05,lambda10,lambda20,lambda50,lambda10,mean,std
10,0.363281012,0.281332016,0.263162851,0.26334095,0.262578011,0.264779091,0.283078988,0.039942047
20,0.759237051,0.736844063,0.736637115,0.73299098,0.738108873,0.752026081,0.742640694,0.010457751
30,1.453890085,1.464946985,1.446151257,1.471022129,1.446001291,1.460983992,1.457165956,0.010235429
40,2.429053783,2.413932085,2.380229235,2.405912876,2.406757116,2.415933132,2.408636371,0.016223886
50,3.6210742,3.627635956,3.619918823,3.58936882,3.570646048,3.609992981,3.606439471,0.022047054
60,5.099648952,5.031591892,5.031885624,5.504703999,5.233833075,5.432102919,5.22229441,0.205705027
70,6.836534023,6.791403055,6.708658934,6.803214788,6.799819946,6.725799084,6.777571638,0.049497045
80,8.649655104,8.819599867,8.932166815,8.840521097,8.708636999,8.683497906,8.772346298,0.109022329
90,10.95892715,10.75898695,10.79537725,10.94909191,11.00256991,10.82064319,10.88093273,0.101340762
100,13.58356905,13.38880181,13.36603284,13.25690293,13.40478373,13.30076003,13.38347507,0.112859691
110,15.97463131,16.1300168,16.13283682,16.32834387,16.03204393,16.06076884,16.1097736,0.122818132
120,19.06782794,19.04934192,19.06820869,18.96987414,19.13713694,19.02550006,19.05298162,0.05515384
130,22.18181205,22.26768208,22.23208284,22.19779301,22.40233302,22.16902685,22.24178831,0.086447968
140,25.6900301,25.65261483,25.9647429,25.86572599,25.64436388,25.71031904,25.75463279,0.130531276
150,29.31426001,29.31193995,29.337363,29.4838841,29.30658031,29.31143689,29.34424404,0.06925916
\end{filecontents*}
\begin{figure*}
\centering
\begin{subfigure}{0.32\linewidth}
\includegraphics[height=\linewidth, width=\linewidth]{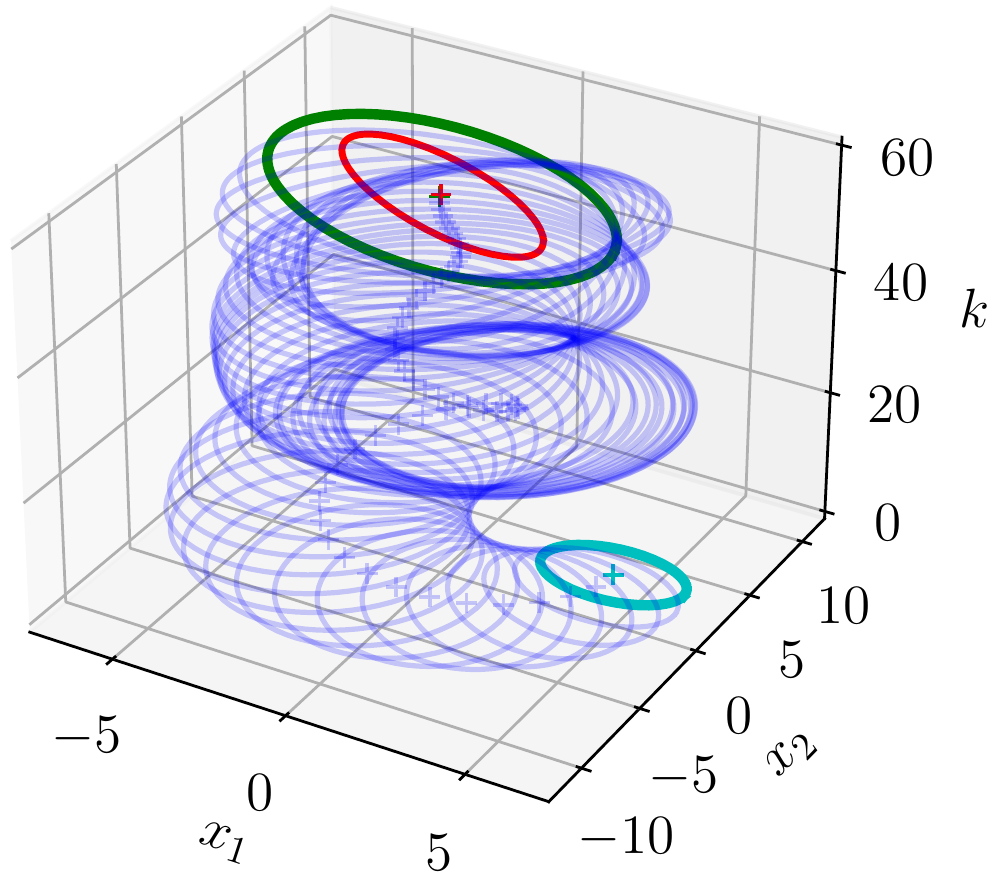}
\caption{$\lambda = 0.5$}\label{subfig:state-evolution-lambda05}
\end{subfigure}
\begin{subfigure}{0.32\linewidth}
\includegraphics[height=\linewidth,width=\linewidth]{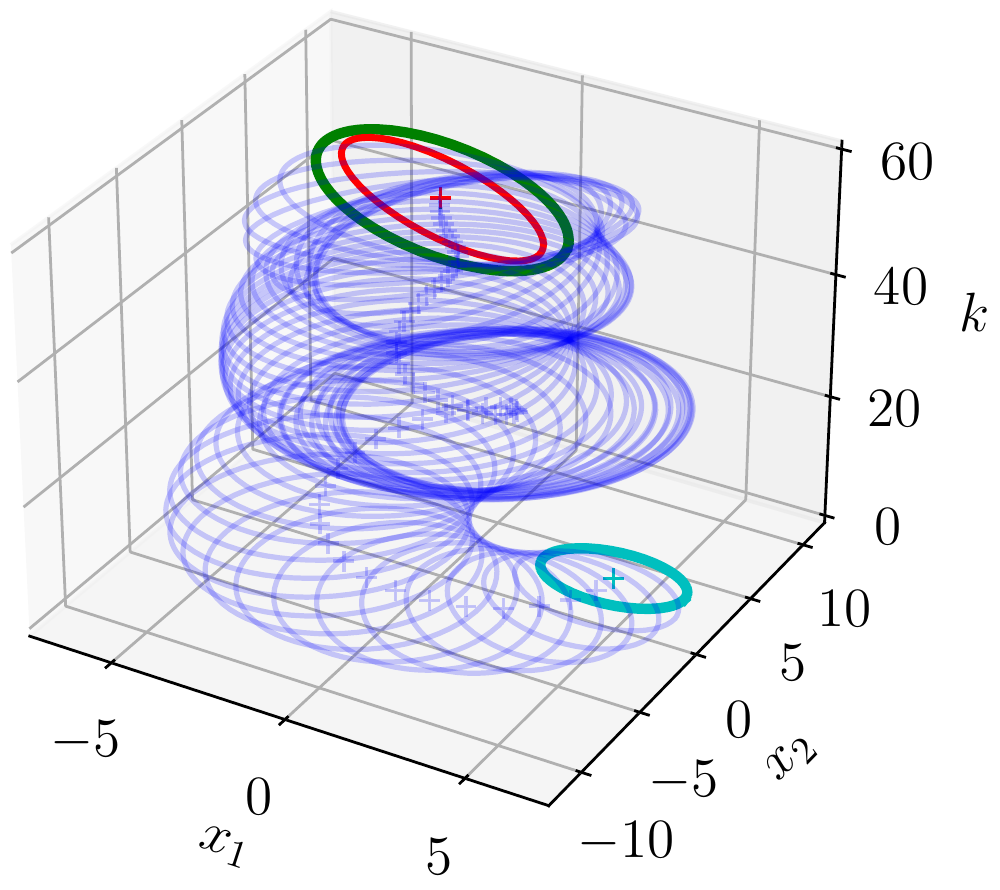}
\caption{$\lambda=2.0$}\label{subfig:state-evolution-lambda20}
\end{subfigure}
\begin{subfigure}{0.32\linewidth}
\includegraphics[height=\linewidth,width=\linewidth]{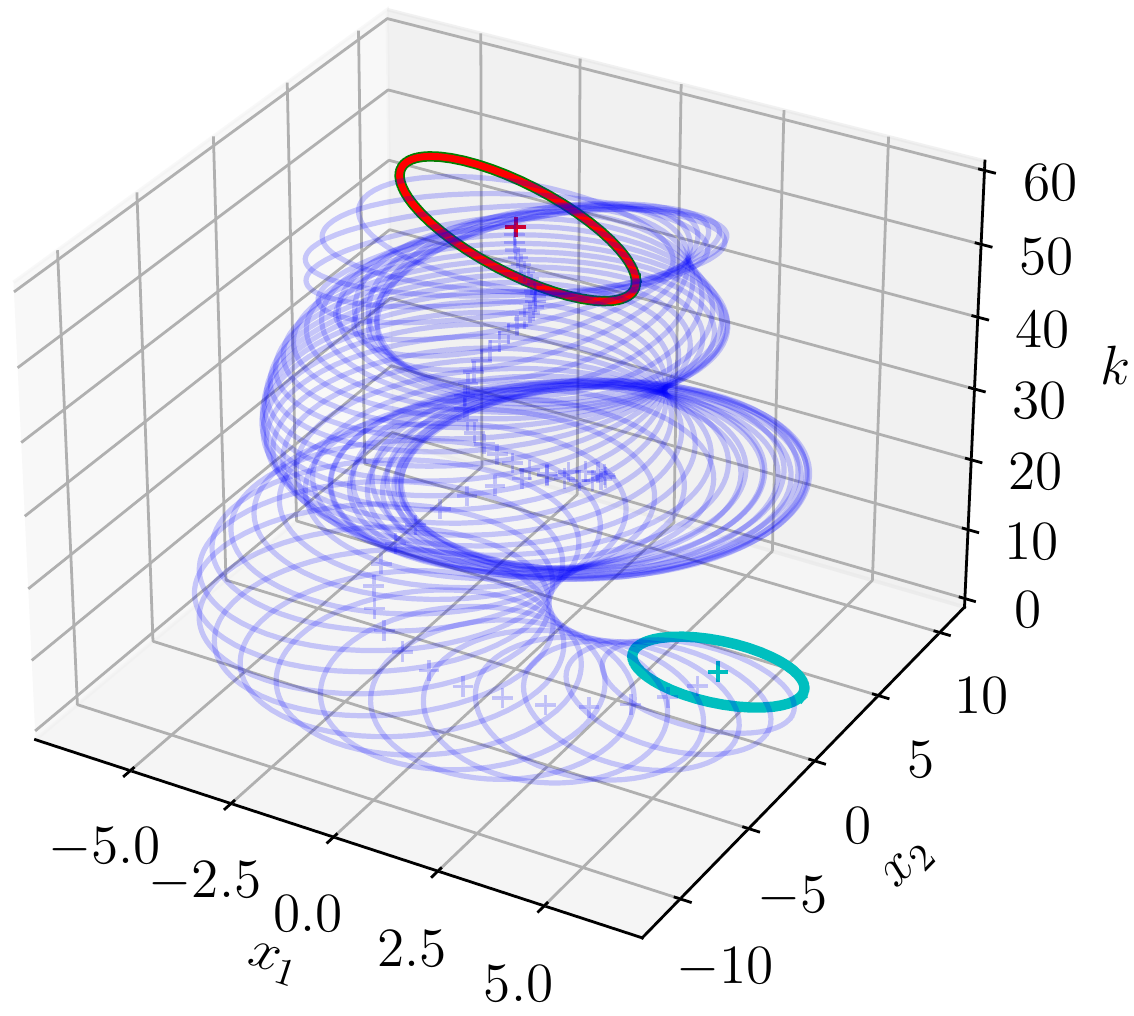}
\caption{$\lambda \rightarrow \infty$}\label{subfig:state-evolution-lambdainfty}
\end{subfigure}
\caption{
Evolution of state statistics illustrated in terms of the 2-$\sigma$ confidence ellipsoids. Cyan, green and red ellipsoids show the initial, terminal and desired distributions, respectively. Figures \ref{subfig:state-evolution-lambda05},  \ref{subfig:state-evolution-lambda20} and \ref{subfig:state-evolution-lambdainfty} show the evolution of the state statistics when $\lambda=0.5$, $\lambda=2.0$, and $\lambda \rightarrow \infty$, respectively. 
}\label{fig:3d-state-evolution}
\end{figure*}

Fig. \ref{fig:3d-state-evolution} illustrates the evolution of the state statistics under the optimal policy found by solving the SDP in \eqref{problem:sdp-formulation}. 
As it can be seen from Figures \ref{subfig:state-evolution-lambda05}-\ref{subfig:state-evolution-lambdainfty}, as $\lambda$ increases the terminal distribution approaches the desired distribution. 

\begin{figure*}[h]
\centering
\begin{subfigure}{0.48\linewidth}
\begin{tikzpicture}
\begin{axis}[
        height=0.55\linewidth,
        width=\linewidth,
        xlabel={$N$},
        ylabel= Computation Time (s),
        ylabel style={at={(0.05, 0.5)}},
        grid=both,
        grid style={line width=0.01pt},
        legend style={nodes={scale=0.9},at={(0.2,0.9)},anchor=north}
    ]
    \addplot table [x=N, y=disturbance-simple, col sep=comma] {computation_converter_all.csv};
    \addlegendentry{Dist-Simple}
    \addplot table [x=N, y=auxilary-simple, col sep=comma] {computation_converter_all.csv};
    \addlegendentry{SH-Simple}
    \addplot table [x=N, y=state-fb, col sep=comma] {computation_converter_all.csv};
    \addlegendentry{SDP}
\end{axis}
\end{tikzpicture}
\caption{}\label{subfig:computation-simple}
\end{subfigure}
\begin{subfigure}{0.48\linewidth}
\begin{tikzpicture}
\begin{axis}[
        height=0.55\linewidth,
        width=\linewidth,
        xlabel={$N$},
        ylabel= Computation Time (s),
        ylabel style={at={(0.05, 0.5)}},
        ylabel style={at={(axis description cs:0.03,.5)}},
        grid=both,
        grid style={line width=0.01pt},
        legend style={nodes={scale=0.9},at={(0.2,0.9)},anchor=north}
    ]
    \addplot table [x=N, y=disturbance-full, col sep=comma] {computation_converter_all.csv};
    \addlegendentry{Dist-Full}
    \addplot table [x=N, y=auxilary-full, col sep=comma] {computation_converter_all.csv};
    \addlegendentry{SH-Full}
    \addplot table [x=N, y=state-fb, col sep=comma] {computation_converter_all.csv};
    \addlegendentry{SDP}
\end{axis}
\end{tikzpicture}
\caption{}\label{subfig:computation-full}
\end{subfigure}
\begin{subfigure}{0.48\linewidth}
\begin{tikzpicture}
\begin{axis}[
        height=0.55\linewidth,
        width=\linewidth,
        xlabel={$N$},
        ylabel= Optimal Values,
        ylabel style={at={(0.05, 0.5)}},
        grid=both,
        grid style={line width=0.01pt},
        legend style={nodes={scale=0.9},at={(0.2,0.9)},anchor=north}
    ]
    \addplot table [x=N, y=disturbance-simple, col sep=comma] {optimal_converter_all.csv};
    \addlegendentry{Dist-Simple}
    \addplot table [x=N, y=auxilary-simple, col sep=comma] {optimal_converter_all.csv};
    \addlegendentry{SH-Simple}
    \addplot table [x=N, y=state-fb, col sep=comma] {optimal_converter_all.csv};
    \addlegendentry{SDP}
\end{axis}
\end{tikzpicture}
\caption{}\label{subfig:optimal-simple}
\end{subfigure}
\begin{subfigure}{0.48\linewidth}
\begin{tikzpicture}
\begin{axis}[
        height=0.55\linewidth,
        width=\linewidth,
        xlabel={$N$},
        ylabel= Optimal Values,
        ylabel style={at={(0.05, 0.5)}},
        ylabel style={at={(axis description cs:0.03,.5)}},
        grid=both,
        grid style={line width=0.01pt},
        legend style={nodes={scale=0.9},at={(0.2,0.9)},anchor=north}
    ]
    \addplot table [x=N, y=disturbance-full, col sep=comma] {optimal_converter_all.csv};
    \addlegendentry{Dist-Full}
    \addplot table [x=N, y=auxilary-full, col sep=comma] {optimal_converter_all.csv};
    \addlegendentry{SH-Full}
    \addplot table  [x=N, y=state-fb, col sep=comma]{optimal_converter_all.csv};
    \addlegendentry{SDP}
\end{axis}
\end{tikzpicture}
\caption{}\label{subfig:optimal-full}
\end{subfigure}
\caption{Computation-time (\ref{subfig:computation-simple}, \ref{subfig:computation-full}) and Optimal Values (\ref{subfig:optimal-simple}, \ref{subfig:optimal-full}) for increasing $N$. Dist and SH represent the affine disturbance feedback policy and the state history feedback policy, respectively. On Figures \ref{subfig:computation-simple} and \ref{subfig:optimal-simple} only terms from the block diagonal or the last disturbance term are used to parametrize the feedback control policy whereas on Figures \ref{subfig:computation-full} and \ref{subfig:optimal-full}, the full state or disturbance history is used. }\label{fig:comparison-LMIwithPrevious}
\end{figure*}
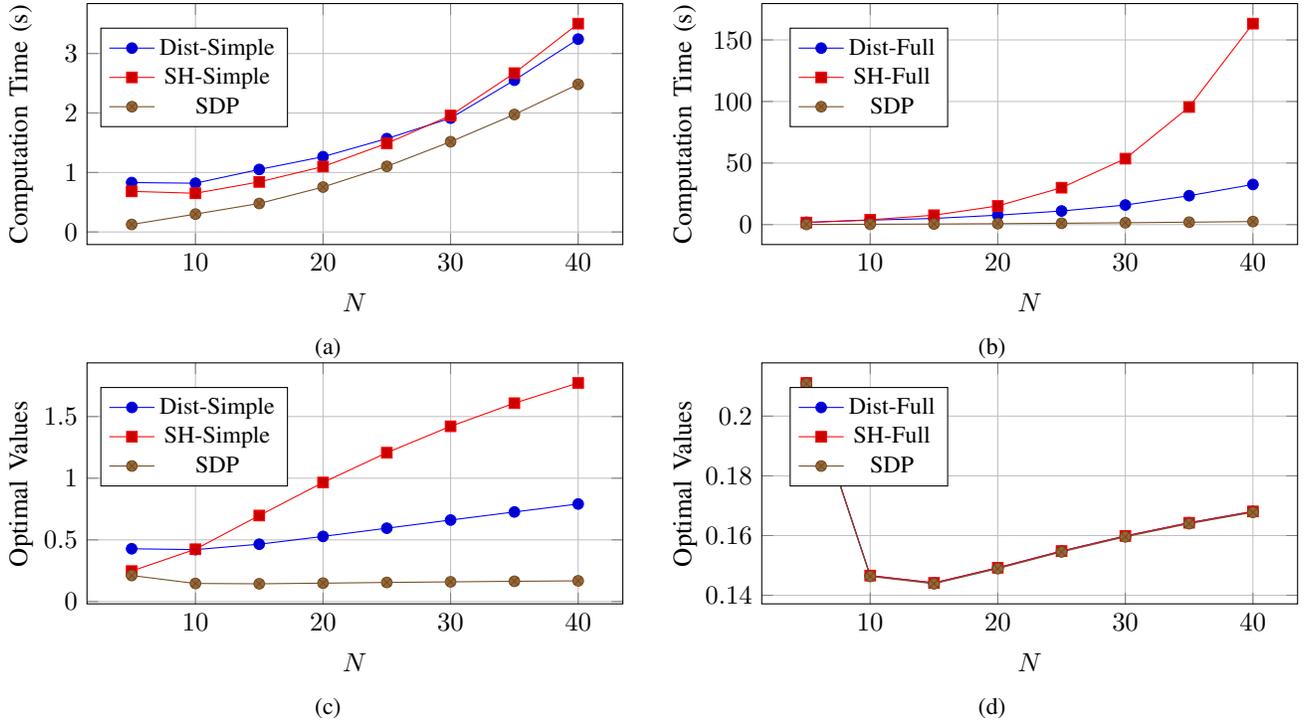

Figures \ref{subfig:computation-simple} and \ref{subfig:optimal-simple} show comparison results between the randomized state feedback policy in \eqref{eq:random-state-policy}, the truncated affine disturbance feedback policy \cite{p:balci2021covariancedisturbance} and the truncated state history feedback policy \cite{p:balci2020covariancewasserstein}. 
In the utilized truncated affine disturbance feedback policy, only the last disturbance term $w_{k-1}$ from the history of disturbances is used for the computation of the control input $u_k$ at time step $k$
(in contrast with the standard or non-truncated affine disturbance feedback policy in which the whole history of disturbances $\{w_{0}, \dots, w_{k-1}\}$ is used instead). The state history feedback policy is parametrized by the matrix variable $\bmtheta$ which is defined in \eqref{eq:variable-transformation1} and which is constrained to be a lower block triangular matrix to preserve the causality of the latter control policy. In the utilized truncated state history feedback policy, the matrix $\bmtheta$ is further restricted to be a block diagonal matrix. 
Note that this truncation scheme does not necessarily mean that the control will be determined by a truncated history of states since
the matrix variable $\cK$ in \eqref{eq:variable-transformation1-eq2} is still lower block triangular (and not necessarily block diagonal) even if $\bmtheta$ is constrained to be a block diagonal matrix. 
However, the number of decision variables that appear in the truncated state history feedback policy for the CS problem is smaller than the standard (non-truncated) version of this policy which typically leads to smaller computational cost.
For both of the truncated policies used in our comparisons, the computation time increases with the same rate with increasing problem horizon $N$ as in the (memoryless) state feedback policy induced by the SDP formulation in \eqref{problem:sdp-formulation}. However, the optimal parameters for the truncated policies yield sub-optimal results compared with our proposed SDP-based (memoryless) state feedback policy as shown in Fig. \ref{subfig:optimal-simple}.
If one uses the standard (or non-truncated) versions of the disturbance affine feedback or state history feedback control policies, the sub-optimality gap will be eliminated, as shown in Fig. \ref{subfig:optimal-full}, however the CS problem becomes computationally intractable as the problem horizon $N$ increases, as shown in Fig. \ref{subfig:computation-full}.

In contrast to the SDP formulation, the other policy parametrizations lead to non-convex problem formulations. 
However, since the values of different local minima are close to each other in this problem instance, optimal values returned by these approaches turned out to be close to each other in Fig. \ref{subfig:optimal-full}.

\begin{table*}[h]
    \centering
    \begin{tabular}{|c||c|c|c|c|c|c|c|c|c|c|c|c|c|}
    \hline
        N & 30 & 40 & 50 & 60 & 70 & 80 & 90 & 100 & 110 & 120 & 130 & 140 & 150  \\
        \hline
        System in Fig. \ref{fig:3d-state-evolution} & 1.45 & 2.40 & 3.60 & 5.22 & 6.77 & 8.77 & 10.88 & 13.38 & 16.11 & 19.05 & 22.24 & 25.75 & 29.34 \\
        \hline
        Double-Int.(2d) & 3.62 & 6.20 & 9.55 & 13.48 & 18.48 & 23.99 & 29.85 & 37.44 & 44.04 & 52.39 & 61.27 & 70.34 & 80.37 \\
        \hline
        Double-Int.(3d) & 6.81 & 11.85 & 18.44 & 26.23 & 35.30 & 45.45 & 57.28 & 71.04 & 85.38 & 100.31 & 117.52 & 136.73 & 156.31 \\
        \hline
    \end{tabular}
    \caption{Computation time of different problem instances in seconds required for the solution of the associated SDP in \eqref{problem:sdp-formulation}. The first row indicates the problem horizon $N$, the second, the third and the fourth row correspond to the randomly generated system in Fig. \ref{fig:3d-state-evolution}, the two-dimensional and the three-dimensional double integrator, respectively.}
    \label{tab:comp_LMI}
\end{table*}

The time complexity of SDP problems is in $\mathcal{O}(\ell^3)$ where $\ell$ is the number of decision variables \cite{p:helmberg1996complexitySDP}. Let $n, m$ be the dimension of the state $x_k$ and control input $u_k$, respectively. Then, the number of decision variables corresponding to the covariance problem is given by $N(n^2 + nm + m^2) + n^2$ and thus, the number of decision variables is linearly proportional to $N$. Therefore, the time complexity of solving Problem \ref{problem:first formulation} over randomized state feedback policies is in $\mathcal{O}(N^3)$.
This fact is also reflected by the computation times reported in Table \ref{tab:comp_LMI}.

\section{Conclusion}\label{s:conclusion}
In this paper, we addressed a class of covariance steering problems for discrete-time stochastic linear systems with Wasserstein terminal cost. 
We showed that the proposed state feedback policies have significant advantages over previous policy parametrizations and formulated an SDP to find the optimal policy parameters by using suitable variable transformations.
Then, we showed that the optimal policy is deterministic for the problems that we addressed. 
Finally, we demonstrate the efficacy of our approach by extensive numerical simulations.

In our future work, we plan to extend our approach to robust and nonlinear covariance control problems as well as dynamic games with covariance assignment constraints.
\bibliographystyle{ieeetr}
\bibliography{bibfile}

\end{document}